\title{Identification of Strong Edges in AMP Chain Graphs}
\author{ {\bf Jose M. Pe\~{n}a} \\
Department of Computer and Information Science \\
Link{\"o}ping University\\
58183 Link{\"o}ping, Sweden\\
}
\newtheorem{theorem}{Theorem}
\newtheorem{lemma}{Lemma}
\newtheorem{corollary}{Corollary}
\def\ci{\!\perp\!}
\def\ra{\rightarrow}
\def\la{\leftarrow}
\def\bb{\leftfootline\!\!\!\!\!\rightfootline}
\def\bo{\leftfootline\!\!\!\!\!\multimap}
\def\ob{\mathrel{\reflectbox{\ensuremath{\bo}}}}
\def\bn{\leftfootline}
\def\nb{\rightfootline}
\def\oo{\mathrel{\reflectbox{\ensuremath{\multimap}}}\!\!\!\!\!\multimap}
\def\no{\multimap}
\newcommand{\comments}[1]{}
\tikzset{tt/.style={decoration={
  markings,
  mark=at position .485 with {\arrow{>}},
  mark=at position .515 with {\arrow{<}}},postaction={decorate}}}
\begin{document}

\maketitle

\begin{abstract}
The essential graph is a distinguished member of a Markov equivalence class of AMP chain graphs. However, the directed edges in the essential graph are not necessarily strong or invariant, i.e. they may not be shared by every member of the equivalence class. Likewise for the undirected edges. In this paper, we develop a procedure for identifying which edges in an essential graph are strong. We also show how this makes it possible to bound some causal effects when the true chain graph is unknown.
\end{abstract}

\maketitle

\section{INTRODUCTION}

In most practical applications, the data available consists of observations. Therefore, it can rarely single out the true causal model. At best, it identifies the Markov equivalence class that contains the true causal model. In this paper, we represent causal models with the help of AMP chain graphs \citep{Anderssonetal.2001}. As argued by \citet{Penna2016}, these graphs are suitable for representing causal linear models with additive Gaussian noise. Intuitively, the directed subgraph of a chain graph represents the causal relations in the domain, and the undirected subgraph represents the dependence structure of the noise terms. Additive noise is a rather common assumption in causal discovery \citep{Petersetal.2017}, mainly because it produces tractable models which are useful for gaining insight into the system under study. Note also that linear structural equation models, which have extensively been studied for causal effect identification \citep{Pearl2009}, are additive noise models.

In order to represent the equivalence class of chain graphs identified from the observations at hand, we typically use a distinguished member of it. In the literature, there are two distinguished members: The essential graph \citep{AnderssonandPerlman2006}, and the largest deflagged graph \citep{RoveratoandStudeny2006}. In general, they do not coincide: The essential graph is a deflagged graph \citep[Lemma 3.2]{AnderssonandPerlman2006} but not necessarily the largest in the equivalence class \citep[p. 57]{Anderssonetal.2001}. Unfortunately, the directed edges in either of the two representatives are not necessarily strong,\footnote{The term invariant or essential is also used in the literature.} i.e. they may not be shared by every member of the equivalence class. Likewise for the undirected edges. In this paper, we use essential graphs to represent equivalence classes of chain graphs. And we develop a procedure for identifying which edges in an essential graph are strong. Note that while we assume that the true chain graph is unknown, its corresponding essential graph can be obtained from observational data as follows. First, learn a chain graph as shown by \citet{Penna2014,Penna2016} and \citet{PennaandGomezOlmedo2016} and, then, transform it into an essential graph as shown by \citet[Section 3]{SonntagandPenna2015}.

Identifying the strong edges in an essential graph is important because it makes it possible to identify causal paths from data even though the data may not be able to single out the true chain graph: Simply output every directed path in the essential graph that consists of only strong edges. Of course, the true chain graph may have additional causal paths. Identifying the strong edges in an essential graph is also important because it allows to efficiently bound some causal effects of the form $p(y | do(x))$ where $X$ and $Y$ are singletons. The simplest way to bound such a causal effect consists in enumerating all the chain graphs that are equivalent to the essential graph and, then, computing the causal effect for each of them from the observational data by adjusting for the appropriate variables. Although we know how to enumerate the equivalent chain graphs \citep[Theorem 3]{SonntagandPenna2015}, this method may be inefficient for all but small domains. Instead, we show in this paper how the knowledge of the strong edges in an essential graph allows to enumerate the adjusting sets without enumerating the equivalent chain graphs explicitly.

The rest of the paper is organized as follows. Section \ref{sec:preliminaries} introduces some preliminaries. Section \ref{sec:strong} presents our algorithm to identify strong edges in an essential graph. Section \ref{sec:bounds} presents our procedure to bound causal effects when the true chain graph is unknown but its corresponding essential graph is known. Section \ref{sec:discussion} closes the paper with some discussion and lines of future research.

\section{PRELIMINARIES}\label{sec:preliminaries}

All the graphs and probability distributions in this paper are defined over a finite set $V$ unless otherwise stated. All the graphs contain at most one edge between a pair of nodes. The elements of $V$ are not distinguished from singletons.

The parents of a set of nodes $X$ of a graph $G$ is the set $Pa_G(X) = \{A | A \ra B$ is in $G$ with $B \in X \}$. The children of $X$ is the set $Ch_G(X) = \{A | B \ra A$ is in $G$ with $B \in X \}$. The neighbors of $X$ is the set $Ne_G(X) = \{A | A - B$ is in $G$ with $B \in X \}$. The adjacents of $X$ is the set $Ad_G(X) = \{A | A \ra B$, $B \ra A$ or $A - B$ is in $G$ with $B \in X \}$. The descendants of $X$ is the set $De_G(X) = \{A | B \ra \cdots \ra A$ is in $G$ with $B \in X \}$. A route from a node $V_{1}$ to a node $V_{n}$ in $G$ is a sequence of (not necessarily distinct) nodes $V_{1}, \ldots, V_{n}$ such that $V_i \in Ad_G(V_{i+1})$ for all $1 \leq i < n$. A route is called a cycle if $V_n=V_1$. A cycle has a chord if two non-consecutive nodes of the cycle are adjacent in $G$. A cycle is called semidirected if it is of the form $V_1 \ra V_2 \no \cdots \no V_n$ where $\no$ is a short for $\ra$ or $-$. A chain graph (CG) is a graph with (possibly) directed and undirected edges, and without semidirected cycles. A set of nodes of a CG $G$ is connected if there exists a route in $G$ between every pair of nodes in the set and such that all the edges in the route are undirected. A chain component of $G$ is a maximal connected set. Note that the chain components of $G$ can be sorted topologically, i.e. for every edge $A \ra B$ in $G$, the component containing $A$ precedes the component containing $B$. A set of nodes of $G$ is complete if there is an undirected edge between every pair of nodes in the set. Moreover, a node is called simplicial if its neighbors are a complete set.

We now recall the interpretation of CGs due to \citet{Anderssonetal.2001}, also known as AMP CGs.\footnote{\citet{Anderssonetal.2001} interpret CGs via the so-called augmentation criterion. \citet[Theorem 4.1]{Levitzetal.2001} introduce the so-called p-separation criterion and prove its equivalence to the augmentation criterion. \citet[Theorem 2]{Penna2016} introduce the route-based criterion that we use in this paper and prove its equivalence to the p-separation criterion.} A node $B$ in a route $\rho$ in a CG $G$ is called a triplex node in $\rho$ if $A \ra B \la C$, $A \ra B - C$, or $A - B \la C$ is a subroute of $\rho$. Moreover, $\rho$ is said to be $Z$-open with $Z \subseteq V$ when (i) every triplex node in $\rho$ is in $Z$, and (ii) every non-triplex node in $\rho$ is outside $Z$. Let $X$, $Y$ and $Z$ denote three disjoint subsets of $V$. When there is no $Z$-open route in $G$ between a node in $X$ and a node in $Y$, we say that $X$ is separated from $Y$ given $Z$ in $G$ and denote it as $X \ci_G Y | Z$. The statistical independences represented by $G$ are the separations $X \ci_G Y | Z$. A probability distribution $p$ is Markovian with respect to $G$ if the independences represented by $G$ are a subset of those in $p$. If the two sets of independences coincide, then $p$ is faithful to $G$. Two CGs are Markov equivalent if the sets of distributions that are Markovian with respect to each CG are the same. If a CG has an induced subgraph of the form $A \ra B \la C$, $A \ra B - C$ or $A - B \la C$, then we say that the CG has a triplex $(A,B,C)$. Two CGs are Markov equivalent if and only if they have the same adjacencies and triplexes \citep[Theorem 5]{Anderssonetal.2001}.

\begin{lemma}\label{lem:eq}
Two CGs $G$ and $H$ are Markov equivalent if and only if they represent the same independences.
\end{lemma}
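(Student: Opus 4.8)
The plan is to prove the two implications separately. The forward (``if'') direction is essentially a tautology, while the backward (``only if'') direction rests on the completeness of the AMP CG interpretation, i.e.\ on the existence of faithful distributions.

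First I would dispatch the ``if'' direction. Suppose $G$ and $H$ represent the same independences. By definition, a distribution $p$ is Markovian with respect to $G$ exactly when every independence represented by $G$ is an independence in $p$; since $G$ and $H$ represent the very same independences, this condition coincides verbatim with the requirement that every independence represented by $H$ holds in $p$, i.e.\ that $p$ is Markovian with respect to $H$. Hence the two classes of Markovian distributions coincide and $G$ and $H$ are Markov equivalent.

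For the ``only if'' direction I would argue by contraposition. Assume $G$ and $H$ do not represent the same independences. By symmetry we may assume there are pairwise disjoint $X, Y, Z \subseteq V$ with $X \ci_G Y \,|\, Z$ but $X \nci_H Y \,|\, Z$. Now invoke the completeness of the AMP CG interpretation: every AMP CG admits a faithful distribution \citep{Levitzetal.2001}; let $p$ be one that is faithful to $H$. Then $p$ is Markovian with respect to $H$, but since $X \nci_H Y \,|\, Z$ and the independences in $p$ are exactly the ones represented by $H$, the independence $X \ci Y \,|\, Z$ fails in $p$. As $G$ represents $X \ci_G Y \,|\, Z$, the distribution $p$ is therefore not Markovian with respect to $G$. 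Consequently the classes of distributions Markovian with respect to $G$ and with respect to $H$ differ, so $G$ and $H$ are not Markov equivalent, which is the contrapositive of what we wanted.

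The only non-routine ingredient is the existence of a distribution faithful to a given AMP CG, which is precisely the completeness result for AMP chain graph Markov models; this is where the real work is hidden, and once it is granted the lemma follows by pure bookkeeping with the definitions of \emph{Markovian} and \emph{faithful}. The same argument can be phrased without contraposition by noting that faithfulness forces the set of independences represented by $G$ to equal the intersection, over all $p$ Markovian with respect to $G$, of the independences holding in $p$, so that equal classes of Markovian distributions yield equal independence models; one should also record the harmless appeal to symmetry when swapping the roles of $G$ and $H$.
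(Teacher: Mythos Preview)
Your proof is correct and follows essentially the same route as the paper: the ``if'' direction is dismissed as immediate from the definitions, and the ``only if'' direction is obtained by invoking the existence of a faithful (Gaussian) distribution from \citet{Levitzetal.2001}. The only cosmetic difference is that you argue by contraposition using a single faithful distribution, whereas the paper argues directly using faithful distributions for both $G$ and $H$ to obtain the two inclusions; the underlying idea is identical.
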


\begin{proof}
The if part is trivial. To see the only if part, note that \citet[Theorem 6.1]{Levitzetal.2001} prove that there are Gaussian distributions $p$ and $q$ that are faithful to $G$ and $H$, respectively. Moreover, $p$ is Markovian with respect to $H$, because $G$ and $H$ are Markov equivalent. Likewise for $q$ and $G$. Therefore, $G$ and $H$ must represent the same independences.
\end{proof}

\subsection{ESSENTIAL GRAPHS}

The essential graph (EG) $G^*$ is a distinguished member of a class of equivalent CGs. Specifically, an edge $A \ra B$ is in $G^*$ if and only if $A \ra B$ is in some member of the class and $A \la B$ is in no member of the class. An algorithm (without proof of correctness) for constructing the EG from any other member of the equivalence class has been developed by \citet[Section 7]{AnderssonandPerlman2004}. An alternative algorithm with proof of correctness has been developed by \citet[Section 3]{SonntagandPenna2015}. The latter algorithm can be seen in Tables \ref{tab:algorithmR} and \ref{tab:rulesR}. A perpendicular line  at the end of an edge such as in $\bn$ or $\bb$ represents a block, and it means that the edge cannot be oriented in that direction. Note that the ends of some of the edges in the rules in Table \ref{tab:rulesR} are labeled with a circle such as in $\bo$ or $\oo$. The circle represents an unspecified end, i.e. a block or nothing. The modifications in the consequents of the rules consist in adding some blocks. Note that only the blocks that appear in the consequents are added, i.e. the circled ends do not get modified. In line 2 of Table \ref{tab:algorithmR}, any such set $S$ will do. For instance, if $B \notin De_G(A)$, then let $S=Ne_G(A) \cup Pa_G(A \cup Ne_G(A))$, otherwise let $S=Ne_G(B) \cup Pa_G(B \cup Ne_G(B))$. In line 5, that the cycle has no blocks means that the ends of the edges in the cycle have no blocks. Note that the rule R1 is not used in line 6, because it will never fire after its repeated application in line 4. Finally, note that $G^*$ may have edges without blocks after line 6.

\begin{table}[t]
\caption{Algorithm for constructing the EG.}\label{tab:algorithmR}
\begin{center}
\scalebox{1}{
\begin{tabular}{|rl|}
\hline
& In: A CG $G$.\\
& Out: The EG $G^*$ in the equivalence class of $G$.\\
&\\
1 & For each ordered pair of non-adjacent nodes $A$\\
& and $B$ in $G$\\ 
2 & \hspace{0.2cm} Set $S_{AB}=S_{BA}=S$ such that $A \ci_G B | S$\\
3 & Let $G^*$ denote the undirected graph that has the\\
& same adjacencies as $G$\\
4 & Apply the rules R1-R4 to $G^*$ while possible\\
5 & Replace every edge $A - B$ in every cycle in $G^*$\\
& that is of length greater than three, chordless,\\
& and without blocks with $A \bb B$\\
6 & Apply the rules R2-R4 to $G^*$ while possible\\
7 & Replace every edge $A \bn B$ and $A \bb B$ in $G^*$\\
& with $A \ra B$ and $A - B$, respectively\\
\hline
\end{tabular}}
\end{center}
\end{table}

\begin{table}[t]
\caption{Rules in the algorithm in Table \ref{tab:algorithmR}. The antecedents represent induced subgraphs.}\label{tab:rulesR}
\begin{center}
\scalebox{0.75}{
\begin{tabular}{|cccc|}
\hline
R1:&
\begin{tabular}{c}
\begin{tikzpicture}[inner sep=1mm]
\node at (0,0) (A) {$A$};
\node at (1.5,0) (B) {$B$};
\node at (3,0) (C) {$C$};
\path[o-o] (A) edge (B);
\path[o-o] (B) edge (C);
\end{tikzpicture} 
\end{tabular}
& $\Rightarrow$ &
\begin{tabular}{c}
\begin{tikzpicture}[inner sep=1mm]
\node at (0,0) (A) {$A$};
\node at (1.5,0) (B) {$B$};
\node at (3,0) (C) {$C$};
\path[|-o] (A) edge (B);
\path[o-|] (B) edge (C);
\end{tikzpicture}
\end{tabular}\\
& and $B \notin S_{AC}$&&\\
&&&\\
\hline
&&&\\
R2:&
\begin{tabular}{c}
\begin{tikzpicture}[inner sep=1mm]
\node at (0,0) (A) {$A$};
\node at (1.5,0) (B) {$B$};
\node at (3,0) (C) {$C$};
\path[|-o] (A) edge (B);
\path[o-o] (B) edge (C);
\end{tikzpicture}
\end{tabular}
& $\Rightarrow$ &
\begin{tabular}{c}
\begin{tikzpicture}[inner sep=1mm]
\node at (0,0) (A) {$A$};
\node at (1.5,0) (B) {$B$};
\node at (3,0) (C) {$C$};
\path[|-o] (A) edge (B);
\path[|-o] (B) edge (C);
\end{tikzpicture}
\end{tabular}\\
& and $B \in S_{AC}$&&\\
&&&\\
\hline
&&&\\
R3:&
\begin{tabular}{c}
\begin{tikzpicture}[inner sep=1mm]
\node at (0,0) (A) {$A$};
\node at (1.5,0) (B) {$\ldots$};
\node at (3,0) (C) {$B$};
\path[|-o] (A) edge (B);
\path[|-o] (B) edge (C);
\path[o-o] (A) edge [bend left] (C);
\end{tikzpicture}
\end{tabular}
& $\Rightarrow$ &
\begin{tabular}{c}
\begin{tikzpicture}[inner sep=1mm]
\node at (0,0) (A) {$A$};
\node at (1.5,0) (B) {$\ldots$};
\node at (3,0) (C) {$B$};
\path[|-o] (A) edge (B);
\path[|-o] (B) edge (C);
\path[|-o] (A) edge [bend left] (C);
\end{tikzpicture}
\end{tabular}\\
&&&\\
\hline
&&&\\
R4:&
\begin{tabular}{c}
\begin{tikzpicture}[inner sep=1mm]
\node at (0,0) (A) {$A$};
\node at (2,0) (B) {$B$};
\node at (1,1) (C) {$C$};
\node at (1,-1) (D) {$D$};
\path[o-o] (A) edge (B);
\path[o-o] (A) edge (C);
\path[o-o] (A) edge (D);
\path[|-o] (C) edge (B);
\path[|-o] (D) edge (B);
\end{tikzpicture}
\end{tabular}
& $\Rightarrow$ &
\begin{tabular}{c}
\begin{tikzpicture}[inner sep=1mm]
\node at (0,0) (A) {$A$};
\node at (2,0) (B) {$B$};
\node at (1,1) (C) {$C$};
\node at (1,-1) (D) {$D$};
\path[|-o] (A) edge (B);
\path[o-o] (A) edge (C);
\path[o-o] (A) edge (D);
\path[|-o] (C) edge (B);
\path[|-o] (D) edge (B);
\end{tikzpicture}
\end{tabular}\\
& and $A \in S_{CD}$&&\\
\hline
\end{tabular}}
\end{center}
\end{table}

\section{STRONG EDGES}\label{sec:strong}

We say that a directed edge in a CG is strong if it appears in every equivalent CG. Likewise for undirected edges. Therefore, strong edges are features of a class of equivalent CGs. Clearly, strong directed edges correspond to directed edges in the EG of the equivalence class. However, the opposite is not true. Likewise for strong undirected edges. For an example, consider the EG $A \ra B \la C - D$. The naive way to detect which edges in an EG are strong consists in generating all the CGs in the equivalence class and, then, recording the shared edges. Since there may be many CGs in the equivalence class, enumerating them in an efficient manner is paramount, but challenging. In truth, it suffices to enumerate what we call the minimally oriented CGs in order to identify the strong directed edges and, then, find one maximally oriented CG to identify the strong undirected edges. We prove these claims in Section \ref{sec:minimallyoriented}. Although there are typically considerably fewer minimally oriented CGs, enumerating them in an efficient manner seems challenging too. That is why we present in Section \ref{sec:algorithmS} an algorithm that does not rely on enumerating CGs or minimally oriented CGs.

\subsection{MINIMALLY AND MAXIMALLY ORIENTED CGs}\label{sec:minimallyoriented}

Given a CG $G$, merging two of its chain components $U$ and $L$ implies replacing the edge $A \ra B$ with $A - B$ for all $A \in U$ and $B \in L$. We say that a merging is feasible when
\begin{enumerate}
\item $L \subseteq Ch_G(X)$ for all $X \in Pa_G(L) \cap U$,
\item $Pa_G(L) \cap U$ is a complete set,
\item $Pa_G(Pa_G(L) \cap U) \subseteq Pa_G(Y)$ for all $Y \in L$, and
\item $De_G(U) \cap Pa_G(L) = \emptyset$.
\end{enumerate}

A feasible merging of two chain components of a CG results in an equivalent CG \citep[Lemma 2]{SonntagandPenna2015}. If a CG does not admit any feasible merging, then we call it minimally oriented. Note that several equivalent minimally oriented CGs may exist, e.g. $A \ra B - C$ and $A - B \la C$. Note also that an EG is not necessarily a minimally oriented CG, e.g. $A \ra B \la C$. If the directed edges of a CG are a subset of the directed edges of a second CG (with the same orientation), then we say that the former is larger than the latter. 

\begin{lemma}\label{lem:largest}
The minimally oriented CGs in an equivalence class are the maximally large CGs in the class, and vice versa.
\end{lemma}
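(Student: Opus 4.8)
The plan is to prove both directions by relating the two orderings on the equivalence class: the "larger" relation (fewer directed edges) and the notion of minimal orientability (no feasible merging). The key observation is that a feasible merging strictly decreases the number of directed edges while staying in the equivalence class (by Lemma 2 of \citet{SonntagandPenna2015}, which we may assume). So a minimally oriented CG — one admitting no feasible merging — cannot be made larger by a merging, and we must argue it cannot be made larger by any other kind of transformation either; conversely, a maximally large CG clearly admits no feasible merging, since such a merging would produce a strictly larger equivalent CG.

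First I would establish the easy direction: if $G$ is maximally large in its equivalence class, then $G$ is minimally oriented. Indeed, if $G$ admitted a feasible merging, the result would be an equivalent CG with strictly fewer directed edges, hence strictly larger, contradicting maximality. For the converse — every minimally oriented CG is maximally large — I would argue by contradiction. Suppose $G$ is minimally oriented but not maximally large, so there is an equivalent CG $H$ with the directed edges of $H$ a proper subset of those of $G$ (same orientation). I would then show that $G$ admits a feasible merging, contradicting minimal orientability. The natural candidate: pick a directed edge $A \ra B$ of $G$ that is undirected in $H$, let $L$ be the chain component of $H$ containing $B$ and $U$ the chain component of $H$ containing $A$ (these are distinct components of $H$ joined by $A \ra B$ in $G$), and argue that merging $U$ and $L$ in $G$ is feasible. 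Verifying the four feasibility conditions is where the work lies: one uses that $G$ and $H$ are Markov equivalent (same adjacencies and triplexes, Theorem 5 of \citet{Anderssonetal.2001}), that $H$ is a CG (no semidirected cycles), and that every edge undirected in $H$ but appearing between $U$ and $L$ in $G$ must in fact be one of the edges affected by the merging — so that after merging $U$ and $L$ in $G$ we move "toward" $H$. One also needs that the directed edges of $G$ strictly containing those of $H$ forces at least one component-joining edge of $G$ to collapse, and that a \emph{minimal} such collapsing (taking $U$, $L$ to be adjacent chain components of $H$ with no intervening component, so that no descendant of $U$ is a parent of $L$) yields condition 4; completeness of $Pa_G(L) \cap U$ (condition 2) and the parent-inheritance conditions 1 and 3 follow because $H$ has $U \cup L$ (or the relevant part) as a single chain component, which is a complete-free but triplex-constrained structure, and equivalence transfers those constraints back to $G$.

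The main obstacle I anticipate is precisely the verification of the four feasibility conditions for the chosen $U$, $L$ — in particular choosing $U$ and $L$ correctly (adjacent chain components of $H$, not arbitrary ones) so that condition 4, $De_G(U) \cap Pa_G(L) = \emptyset$, holds, and then showing conditions 1–3 using only Markov equivalence of $G$ and $H$ plus the chain-graph property. A secondary subtlety is confirming that the merged graph is still a valid CG and still equivalent — but this is exactly the content of \citet[Lemma 2]{SonntagandPenna2015}, which we invoke rather than reprove. Once the feasible merging is exhibited, the contradiction with minimal orientability is immediate, completing the converse and hence the lemma.
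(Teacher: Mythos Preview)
Your overall architecture matches the paper's: the easy direction is identical, and the hard direction proceeds by contradiction, locating a feasible merging in $G$. But there are two genuine gaps.

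First, you define $U$ and $L$ as the chain components of $H$ containing $A$ and $B$ and call them ``distinct components of $H$ joined by $A \ra B$ in $G$.'' This cannot work: you chose $A \ra B$ precisely because it is \emph{undirected} in $H$, so $A$ and $B$ lie in the \emph{same} chain component of $H$, and $U=L$. The components you want are the chain components of $G$ containing $A$ and $B$; those are distinct because $A \ra B$ is directed in $G$, and it is the merge of those two $G$-components whose feasibility you must analyse.

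Second, and more seriously, you have not identified the selection principle that makes conditions 3 and 4 go through. The paper fixes a topological ordering of the chain components of $G$ and takes $A \ra B$ to be the \emph{earliest} edge (in that ordering) that is directed in $G$ but undirected in $H$. This minimality is exactly what is used: for condition 3, one needs that a certain edge $Z \ra X$ in $G$ (with $X \in U$) is still directed in $H$, and this follows because $Z \ra X$ precedes $A \ra B$ in the ordering; for condition 4, one obtains a subroute in $G$ all of whose directed edges precede $A \ra B$, forcing them to be undirected in $H$ and producing a semidirected cycle there. Your phrase ``a minimal such collapsing (taking $U$, $L$ to be adjacent chain components of $H$ with no intervening component)'' is both based on the wrong graph (again $H$ rather than $G$) and not the right notion of minimality: adjacency of components is not what drives the argument; earliest position in a topological order of $G$ is. Without this choice, the verification of conditions 3 and 4 that you defer to ``where the work lies'' will not close.
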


\begin{proof}
Clearly, a maximally large CG must be minimally oriented because, otherwise, it admits a feasible merging which results in a larger CG, which is a contradiction. On the other hand, let $G$ be a minimally oriented CG, and assume to the contrary that there is a CG $H$ that is equivalent but larger than $G$. Specifically, let $G$ have an edge $A \ra B$ whereas $H$ has an edge $A - B$. Consider a topological ordering of the chain components of $G$. We say that an edge $X \ra Y$ precedes an edge $Z \ra W$ in $G$ if the chain component of $X$ precedes the chain component of $Z$ in the ordering, or if both chain components coincide and the chain component of $Y$ precedes the chain component of $W$ in the ordering. Assume without loss of generality that no other edge that is directed in $G$ but undirected in $H$ precedes the edge $A \ra B$ in $G$. Let $U$ and $L$ denote the chain components of $A$ and $B$, respectively. Clearly, all the directed edges from $U$ to $L$ in $G$ must be undirected in $H$ because, otherwise, $H$ has a semidirected cycle. However, this implies a contradiction. To see it, recall that $G$ is a minimally oriented CG and, thus, merging $U$ and $L$ in $G$ is not feasible. If condition 1 fails, then $G$ has an induced subgraph $X \ra Y - Z$ where $X \in U$ and $Y, Z \in L$, whereas $H$ has an induced subgraph $X - Y - Z$. However, this implies that $G$ and $H$ are not equivalent, since $G$ has a triplex $(X,Y,Z)$ that $H$ has not.

If condition 2 fails but condition 1 holds, then $G$ has an induced subgraph $X \ra Y \la Z$ where $X, Z \in U$ and $Y \in L$, whereas $H$ has an induced subgraph $X - Y - Z$. However, this implies that $G$ and $H$ are not equivalent, since $G$ has a triplex $(X,Y,Z)$ that $H$ has not.

If condition 3 fails but condition 1 holds, then $G$ has an induced subgraph $Z \ra X \ra Y$ where $X \in U$, $Y \in L$ and $Z \in V \setminus (U \cup L)$, whereas $H$ has an induced subgraph $Z \ra X - Y$. However, this implies that $G$ and $H$ are not equivalent, since $H$ has a triplex $(Z,X,Y)$ that $G$ has not. Note that $Z \ra X$ is in $H$ because $Z \ra X$ precedes $X \ra Y$ and thus $A \ra B$ in $G$.

Finally, if condition 4 fails but condition 1 holds, then $G$ has a subgraph of the form $X \ra Y \la \cdots \la Z \la X' - \cdots - X$ where $X,X' \in U$, $Y \in L$ and $Z \in V \setminus (U \cup L)$, whereas $H$ has a subgraph of the form $X - Y - \cdots - Z - X$. To see it, note that any other option results in a semidirected cycle because, recall, $H$ is larger than $G$. However, this is a contradiction because $X' \ra Z$ precedes $X \ra Y$ and thus $A \ra B$ in $G$.
\end{proof}

The following result follows from the previous lemma.

\begin{theorem}
A directed edge is strong if and only if it is in every minimally oriented CG in the equivalence class.
\end{theorem}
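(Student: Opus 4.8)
The plan is to prove the two directions separately; the forward direction is trivial and the converse rests on the fact that feasible mergings produce equivalent CGs, together with the \emph{larger than} relation.

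For the forward direction, suppose a directed edge $A \ra B$ is strong. By definition it appears in every equivalent CG, and every minimally oriented CG in the equivalence class is in particular an equivalent CG, so $A \ra B$ appears in all of them.

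For the converse, suppose $A \ra B$ appears in every minimally oriented CG of the equivalence class, and let $G$ be an arbitrary CG in the class. I would first argue that some minimally oriented CG in the class is larger than $G$. Starting from $G$, as long as the current CG is not minimally oriented it admits a feasible merging, which by \citet[Lemma 2]{SonntagandPenna2015} yields an equivalent CG, and which by the definition of merging has strictly fewer directed edges while leaving the orientation of every surviving directed edge unchanged. Since the number of directed edges is finite and strictly decreases at each step, this process terminates at a CG $H$ that admits no feasible merging, i.e.\ a minimally oriented CG; moreover $H$ is equivalent to $G$ and larger than $G$, since its directed edges form a subset (with the same orientations) of those of $G$. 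By assumption $A \ra B$ is in $H$, and because $H$ is larger than $G$ this edge is also in $G$. As $G$ was arbitrary, $A \ra B$ lies in every CG of the equivalence class, hence it is strong.

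The only point requiring care is that each feasible merging both preserves Markov equivalence and modifies only the merged edges, so that the terminal CG $H$ is genuinely an equivalent, minimally oriented CG that is larger than $G$; this is where Lemma \ref{lem:largest} and \citet[Lemma 2]{SonntagandPenna2015} do the real work, and given them the argument is short.
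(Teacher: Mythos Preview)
Your proof is correct and is precisely the argument the paper has in mind when it writes that the theorem ``follows from the previous lemma'': starting from an arbitrary equivalent CG and repeatedly performing feasible mergings yields a minimally oriented CG that is larger than it, and then the hypothesis forces the directed edge to survive. One minor remark: in your closing paragraph you cite Lemma~\ref{lem:largest} as doing ``real work'', but in fact your repeated-merging construction already produces a minimally oriented CG larger than $G$ directly, so only \citet[Lemma 2]{SonntagandPenna2015} is actually needed; Lemma~\ref{lem:largest} is the paper's way of packaging the same idea rather than an additional ingredient.
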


Finally, one may think that an undirected edge that is in every minimally oriented CG in the equivalence class is strong. But this is not true. For an example, consider the equivalence class represented by the EG $A - B$. Instead, an undirected edge is strong if and only if it is in any maximally oriented CG in the equivalence class \citep[Theorems 4 and 5]{SonntagandPenna2015}. Formally, a maximally oriented CG is a CG that does not admit any feasible split, which is the inverse operation of the feasible merge operation described before. Alternatively, we can say that if the minimally oriented CGs are the maximally large CGs in an equivalence class, then the maximally oriented CGs are the minimally large \citep[Lemma 13]{SonntagandPenna2015}. Note that several equivalent maximally oriented CGs may exist (e.g., $A \ra B$ and $A \la B$) but all of them have the same undirected edges \citep[Theorems 4 and 5]{SonntagandPenna2015}. Note also that an EG is not necessarily a maximally oriented CG, e.g. $A - B$.

\begin{table}[t]
\caption{Algorithm to label strong edges in an EG. It replaces line 7 of the algorithm in Table \ref{tab:algorithmR}.}\label{tab:algorithmS}
\begin{center}
\scalebox{1}{
\begin{tabular}{|rl|}
\hline
7 & Label every edge $X \bb Y$ as strong in $G^*$\\
8 & For each edge $X \bn Y$ in $G^*$\\
9 & \hspace{0.2cm} Set $H = G^*$\\
10 & \hspace{0.2cm} Replace $X \bn Y$ in $H$ with $X \bb Y$\\
11 & \hspace{0.2cm} Apply the rules R2-3 to $H$ while possible\\
12 & \hspace{0.2cm} If $G^*$ has an induced subgraph $A \bn B \ob C$\\
& \hspace{0.2cm} whereas $H$ has $A \bb B \bb C$ then\\
13 & \hspace{0.5cm} Label $X \bn Y$ as strong in $G^*$\\
14 & Replace every edge $X \bn Y$ and $X \bb Y$ in $G^*$\\
& with $X \ra Y$ and $X - Y$, respectively\\
\hline
\end{tabular}}
\end{center}
\end{table}

\subsection{ENUMERATION-FREE ALGORITHM}\label{sec:algorithmS}

Although the minimally and maximally oriented CGs in an equivalence class can be obtained by repeatedly performing feasible splits and merges \citep[Theorem 3]{SonntagandPenna2015}, the approach outlined above for identifying strong edges via enumeration may be inefficient for all but small domains. Hence, Table \ref{tab:algorithmS} presents an alternative algorithm that does not rely on enumerating the CGs or the minimally oriented CGs in the equivalence class. The new algorithm replaces line 7 in Table \ref{tab:algorithmR}. In other words, the new algorithm postpones orienting edges until line 14, and in lines 7-13 it identifies which of the future directed and undirected edges are strong. Line 7 identifies the strong undirected edges, whereas lines 8-13 identify the strong directed edges. To do the latter, the algorithm tries to build a CG $H$ that is equivalent to $G^*$ and contains an edge $X - Y$. If this fails, then $X \ra Y$ is strong. Specifically, line 10 forces the edge between $X$ and $Y$ to be undirected in $H$ by blocking the end at $Y$. Line 11 computes other blocks that follow from the new block at $Y$. After line 11, $H$ can be oriented as indicated in line 14 without creating a semidirected cycle or a triplex that is not in $G^*$. Finally, line 12 checks if every triplex in $G^*$ is in $H$. If not, $X - Y$ is incompatible with some triplex in $G^*$, which implies that $X \ra Y$ is strong in $G^*$. We prove the correctness of the algorithm below.

\begin{lemma}\label{lem:noundirectedcycleS}
After line 11, $H$ does not have any induced subgraph of the form
\begin{tabular}{c}
\scalebox{0.75}{
\begin{tikzpicture}[inner sep=1mm]
\node at (0,0) (A) {$A$};
\node at (1,0) (B) {$B$};
\node at (2,0) (C) {$C$};
\path[|-o] (A) edge (B);
\path[-] (B) edge (C);
\path[-] (A) edge [bend left] (C);
\end{tikzpicture}.}
\end{tabular}
\end{lemma}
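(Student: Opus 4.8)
The plan is to argue by contradiction: suppose that after line 11 the graph $H$ has an induced subgraph $A \bn B - C$ with $A - C$ also present (the configuration drawn in the statement). I would trace how such a configuration could have arisen. Before line 10, $H$ equals $G^*$ as it stands just before line 7, i.e.\ after line 6 of Table~\ref{tab:algorithmR}. The only modification in line 10 is replacing the single edge $X \bn Y$ by $X \bb Y$, and the only modifications in line 11 are the additions of blocks by rules R2 and R3. Crucially, rules R2 and R3 only add blocks to ends that were circled (unspecified), and no rule ever removes a block; so the set of ``hard'' blocks (the $\bn$-type ends) in $H$ after line 11 includes those of $G^*$ after line 6, plus possibly some new ones coming from the propagation of the block newly placed at $Y$.

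The key observation I would exploit is an invariant of the construction in Table~\ref{tab:algorithmR}: after line 4 (exhaustive application of R1--R4), and hence after line 6 as well, $G^*$ contains no induced subgraph of the forbidden form $A \bn B \,\oo\, C$ with $A \,\oo\, C$ — because R3 (together with R1/R2) would have fired on it. So the forbidden configuration in $H$ cannot be ``inherited'' wholesale from $G^*$ after line 6; it must involve at least one block that is genuinely new, placed during line 11, and therefore ultimately traceable to the block at $Y$ introduced in line 10. I would then do a short case analysis on which of the three ends in the configuration (the block at $B$ on the $A$--$B$ edge, and the two undirected ends: at $B$ on $B$--$C$, and at $A$ on $A$--$C$) is the ``new'' one. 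Since line 11 only \emph{adds} blocks and never unblocks, an end that is a plain undirected end in $H$ was already unblocked in $G^*$ after line 6; hence both the $B$-end of $B$--$C$ and the $A$-end of $A$--$C$ were unblocked before line 11. That forces the new block to be the one at $B$ on the $A$--$B$ edge. But then consider the state of $H$ just before that block was added by R2 or R3: the ends of $B$--$C$ at $B$ and of $A$--$C$ at $A$ were already unblocked, and the $B$-end of $A$--$B$ was circled (unspecified). Whichever of R2 or R3 placed the block at $B$, I would check that R3 would now also apply to the triangle on $\{A,B,C\}$ (or R1/R2 would have prevented the unblocked ends in the first place), contradicting that line 11 ran ``while possible.''

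Concretely, the steps in order: (1) set up the contradiction hypothesis and recall that line 10--11 only add $\bn$-blocks to circled ends and never delete a block; (2) note that consequently the two undirected ends in the forbidden configuration were already undirected in $G^*$ at the start of line 11, hence present (unblocked) in $G^*$ after line 6; (3) invoke the termination of R1--R4 in line 4 (and R2--R4 in line 6) to conclude the forbidden configuration is not present in $G^*$ after line 6, so the block at $B$ on $A$--$B$ must have been added during line 11; (4) examine the configuration at the moment that block is added and show R3 (or R1/R2) is still applicable to $\{A,B,C\}$, contradicting that line 11 has terminated.

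The main obstacle I anticipate is step~(3)--(4): making the bookkeeping of ``which edges are present and which ends are circled versus blocked'' airtight across the transition from line 6 to line 11, and in particular verifying that the triangle $A,B,C$ with the stated ends really does trigger R3 (whose antecedent is a path $A \bn \cdots \bn B$ closed by an edge $A \,\oo\, B$). One subtlety is that in the forbidden configuration it is the \emph{middle} node $B$ that carries the incoming block on the $A$--$B$ edge while $B$--$C$ is fully undirected; I would need to match this against R3's antecedent carefully, possibly passing through R1 (which turns $A \,\oo\, B \,\oo\, C$ into $A \bn B \nb C$) or R2 first, and confirm that in every case some rule in $\{$R1,R2,R3$\}$ is enabled. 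I expect this to be a genuinely finite check but one that must be done with care about the circled/blocked distinction.
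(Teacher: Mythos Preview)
Your overall strategy --- trace the offending block back to its origin and derive a contradiction --- is exactly the paper's, but the execution plan has two concrete problems.

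First, a small but telling slip: in the forbidden triangle the block sits at the $A$-end of the $A$--$B$ edge, not the $B$-end (the pattern is $A \bo B$, and recall that $A \bn B$ is what becomes $A \ra B$ in line~14). So your step~(3) should conclude that the block at \emph{$A$'s} end was added during lines 10--11. Second, and more seriously, your step~(4) will not close. Arguing that ``R3 (or R1/R2) is still applicable to $\{A,B,C\}$'' cannot work: the bare triangle with $B-C$ and $A-C$ both block-free does not by itself enable any of R1--R3. The paper instead looks at \emph{why} the block at $A$ was placed --- line~10, or R2 (supplying a witness $D$ with $D \bo A$ and $A \in S_{DB}$), or R3 (supplying a chordless $\bo$-path $A \bo \cdots \bo D \bo B$) --- and then splits into four sub-cases on how $D$ relates to $C$. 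In most sub-cases some rule does fire and puts a block on $A-C$ or $B-C$, contradiction. But when $D-C$ is itself a plain edge, no rule fires; one instead observes that $\{D,A,C\}$ (for R2) or $\{D,B,C\}$ (for R3) is already a forbidden triangle, and that it appeared \emph{strictly earlier} than $\{A,B,C\}$ in the sequence of block additions (because the block on $D \bo A$, resp.\ $D \bo B$, is precisely what triggered the block at $A$). That is why the paper opens by fixing one sequence of block additions and selecting a forbidden triangle that appears \emph{first} --- a minimal-counterexample device you never invoke, and without which the $D-C$-plain sub-cases (the paper's Cases~2.2 and~3.2) cannot be eliminated.
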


\begin{proof}
The proof is an adaptation of the proof of Lemma 5 by \citet{Penna2014}. Assume to the contrary that the lemma does not hold. We interpret the execution of lines 10-11 as a sequence of block additions and, for the rest of the proof, one particular sequence of these block additions is fixed. Fixing this sequence is a crucial point upon which some important later steps of the proof are based. Since there may be several induced subgraphs of $H$ of the form under study after lines 10-11, let us consider any of the induced subgraphs
\begin{tabular}{c}
\scalebox{0.75}{
\begin{tikzpicture}[inner sep=1mm]
\node at (0,0) (A) {$A$};
\node at (1,0) (B) {$B$};
\node at (2,0) (C) {$C$};
\path[|-o] (A) edge (B);
\path[-] (B) edge (C);
\path[-] (A) edge [bend left] (C);
\end{tikzpicture}}
\end{tabular}
that appear first during the execution of lines 10-11 and fix it for the rest of the proof. Note that $H$ has no such induced subgraph after line 9 \citep[Lemma 9]{SonntagandPenna2015}. Now, consider the following cases.

\begin{description}

\item[Case 1] Assume that $A \bo B$ is in $H$ due line 10. However, this implies that $H$ had an induced subgraph\begin{tabular}{c}
\scalebox{0.75}{
\begin{tikzpicture}[inner sep=1mm]
\node at (0,0) (A) {$A$};
\node at (1,0) (B) {$B$};
\node at (2,0) (C) {$C$};
\path[-|] (A) edge (B);
\path[-] (B) edge (C);
\path[-] (A) edge [bend left] (C);
\end{tikzpicture}}
\end{tabular}
before line 10, which is a contradiction \citep[Lemma 9]{SonntagandPenna2015}.

\item[Case 2] Assume that $A \bo B$ is in $H$ due to R2 in line 11. Then, after line 11, $H$ has an induced subgraph of one of the following forms: 

\begin{table}[H]
\centering
\scalebox{0.75}{
\begin{tabular}{cc}
\begin{tikzpicture}[inner sep=1mm]
\node at (0,0) (A) {$A$};
\node at (1,0) (B) {$B$};
\node at (2,0) (C) {$C$};
\node at (0,-1) (D) {$D$};
\path[|-o] (A) edge (B);
\path[-] (B) edge (C);
\path[-] (A) edge [bend left] (C);
\path[|-o] (D) edge (A);
\end{tikzpicture}
&
\begin{tikzpicture}[inner sep=1mm]
\node at (0,0) (A) {$A$};
\node at (1,0) (B) {$B$};
\node at (2,0) (C) {$C$};
\node at (0,-1) (D) {$D$};
\path[|-o] (A) edge (B);
\path[-] (B) edge (C);
\path[-] (A) edge [bend left] (C);
\path[|-o] (D) edge (A);
\path[-] (D) edge (C);
\end{tikzpicture}\\
case 2.1&case 2.2\\
\\
\begin{tikzpicture}[inner sep=1mm]
\node at (0,0) (A) {$A$};
\node at (1,0) (B) {$B$};
\node at (2,0) (C) {$C$};
\node at (0,-1) (D) {$D$};
\path[|-o] (A) edge (B);
\path[-] (B) edge (C);
\path[-] (A) edge [bend left] (C);
\path[|-o] (D) edge (A);
\path[o-|] (D) edge (C);
\end{tikzpicture}
&
\begin{tikzpicture}[inner sep=1mm]
\node at (0,0) (A) {$A$};
\node at (1,0) (B) {$B$};
\node at (2,0) (C) {$C$};
\node at (0,-1) (D) {$D$};
\path[|-o] (A) edge (B);
\path[-] (B) edge (C);
\path[-] (A) edge [bend left] (C);
\path[|-o] (D) edge (A);
\path[|-] (D) edge (C);
\end{tikzpicture}\\
case 2.3&case 2.4
\end{tabular}}
\end{table}

\begin{description}

\item[Case 2.1] If $A \notin S_{CD}$ then $A \nb C$ is in $H$ by R1 in line 4 of Table \ref{tab:algorithmR}, else $A \bn C$ is in $H$ by R2. Either case is a contradiction.

\item[Case 2.2] Note that
\begin{tabular}{c}
\scalebox{0.75}{
\begin{tikzpicture}[inner sep=1mm]
\node at (0,0) (A) {$D$};
\node at (1,0) (B) {$A$};
\node at (2,0) (C) {$C$};
\path[|-o] (A) edge (B);
\path[-] (B) edge (C);
\path[-] (A) edge [bend left] (C);
\end{tikzpicture}}
\end{tabular}
cannot be an induced subgraph of $H$ after line 11 because, otherwise, it would contradict the assumption that
\begin{tabular}{c}
\scalebox{0.75}{
\begin{tikzpicture}[inner sep=1mm]
\node at (0,0) (A) {$A$};
\node at (1,0) (B) {$B$};
\node at (2,0) (C) {$C$};
\path[|-o] (A) edge (B);
\path[-] (B) edge (C);
\path[-] (A) edge [bend left] (C);
\end{tikzpicture}}
\end{tabular}
is one of the first induced subgraph of that form that appeared during the execution of lines 10-11. So, this case is impossible.

\item[Case 2.3] Note that $A \nb C$ is in $H$ by R3, which is a contradiction.

\item[Case 2.4] If $C \notin S_{BD}$ then $B \bn C$ is in $H$ by R1 in line 4 of Table \ref{tab:algorithmR}, else $B \nb C$ is in $H$ by R2. Either case is a contradiction.

\end{description}

\item[Case 3] Assume that $A \bo B$ is in $H$ due to R3 in line 11. Then, after line 11, $H$ had a subgraph of one of the following forms, where possible additional edges between $C$ and internal nodes of the route $A \bo \cdots \bo D$ are not shown:

\begin{table}[H]
\centering
\scalebox{0.75}{
\begin{tabular}{cc}
\begin{tikzpicture}[inner sep=1mm]
\node at (-1,0) (A) {$A$};
\node at (1,0) (B) {$B$};
\node at (2,0) (C) {$C$};
\node at (1,-1) (D) {$D$};
\node at (0,-1) (E) {$\ldots$};
\path[|-o] (A) edge (B);
\path[-] (B) edge (C);
\path[-] (A) edge [bend left] (C);
\path[|-o] (A) edge [bend right] (E);
\path[|-o] (E) edge (D);
\path[|-o] (D) edge (B);
\end{tikzpicture}
&
\begin{tikzpicture}[inner sep=1mm]
\node at (-1,0) (A) {$A$};
\node at (1,0) (B) {$B$};
\node at (2,0) (C) {$C$};
\node at (1,-1) (D) {$D$};
\node at (0,-1) (E) {$\ldots$};
\path[|-o] (A) edge (B);
\path[-] (B) edge (C);
\path[-] (A) edge [bend left] (C);
\path[|-o] (A) edge [bend right] (E);
\path[|-o] (E) edge (D);
\path[|-o] (D) edge (B);
\path[-] (D) edge (C);
\end{tikzpicture}
\\
case 3.1&case 3.2\\
\\
\begin{tikzpicture}[inner sep=1mm]
\node at (-1,0) (A) {$A$};
\node at (1,0) (B) {$B$};
\node at (2,0) (C) {$C$};
\node at (1,-1) (D) {$D$};
\node at (0,-1) (E) {$\ldots$};
\path[|-o] (A) edge (B);
\path[-] (B) edge (C);
\path[-] (A) edge [bend left] (C);
\path[|-o] (A) edge [bend right] (E);
\path[|-o] (E) edge (D);
\path[|-o] (D) edge (B);
\path[o-|] (D) edge (C);
\end{tikzpicture}
&
\begin{tikzpicture}[inner sep=1mm]
\node at (-1,0) (A) {$A$};
\node at (1,0) (B) {$B$};
\node at (2,0) (C) {$C$};
\node at (1,-1) (D) {$D$};
\node at (0,-1) (E) {$\ldots$};
\path[|-o] (A) edge (B);
\path[-] (B) edge (C);
\path[-] (A) edge [bend left] (C);
\path[|-o] (A) edge [bend right] (E);
\path[|-o] (E) edge (D);
\path[|-o] (D) edge (B);
\path[|-] (D) edge (C);
\end{tikzpicture}\\
case 3.3&case 3.4
\end{tabular}}
\end{table}

Note that $C$ cannot belong to the route $A \bo \cdots $$\bo D$ because, otherwise, R3 could not have been applied since the cycle \mbox{$A \bo \cdots \bo D \bo B \no A$} would not have been chordless.

\begin{description}

\item[Case 3.1] If $B \notin S_{CD}$ then $B \nb C$ is in $H$ by R1 in line 4 of Table \ref{tab:algorithmR}, else $B \bn C$ is in $H$ by R2. Either case is a contradiction.

\item[Case 3.2] Note that
\begin{tabular}{c}
\scalebox{0.75}{
\begin{tikzpicture}[inner sep=1mm]
\node at (0,0) (A) {$D$};
\node at (1,0) (B) {$B$};
\node at (2,0) (C) {$C$};
\path[|-o] (A) edge (B);
\path[-] (B) edge (C);
\path[-] (A) edge [bend left] (C);
\end{tikzpicture}}
\end{tabular}
cannot be an induced subgraph of $H$ after line 11 because, otherwise, it would contradict the assumption that
\begin{tabular}{c}
\scalebox{0.75}{
\begin{tikzpicture}[inner sep=1mm]
\node at (0,0) (A) {$A$};
\node at (1,0) (B) {$B$};
\node at (2,0) (C) {$C$};
\path[|-o] (A) edge (B);
\path[-] (B) edge (C);
\path[-] (A) edge [bend left] (C);
\end{tikzpicture}}
\end{tabular}
is one of the first induced subgraph of that form that appeared during the execution of lines 10-11. So, this case is impossible.

\item[Case 3.3] Note that $B \nb C$ is in $H$ by R3, which is a contradiction.

\item[Case 3.4] Note that $C$ cannot be adjacent to any node of the route $A \bo \cdots \bo D$ besides $A$ and $D$ and, thus, $A \bn C$ is in $H$ by R3. To see it, assume to the contrary that $C$ is adjacent to some nodes $E_1, \ldots, E_n \neq A, D$ of the route $A \bo \cdots \bo D$. Assume without loss of generality that $E_i$ is closer to $A$ in the route than $E_{i+1}$ for all $1 \leq i < n$. Now, note that $E_n \bo C$ must be in $H$ by R3. This implies that $E_{n-1} \bo C$ must be in $H$ by R3. By repeated application of this argument, we can conclude that $E_1 \bo C$ must be in $H$ and, thus, $A \bn C$ must be in $H$ by R3, which is a contradiction.

\end{description}

\end{description}

\end{proof}

\begin{lemma}\label{lem:oppositeblockR}
After line 11, every chordless cycle $\rho: V_1, \ldots, V_n=V_1$ in $H$ that has an edge $V_i \bn V_{i+1}$ also has an edge $V_j \nb V_{j+1}$.
\end{lemma}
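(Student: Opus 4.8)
The plan is to argue by contradiction, mirroring the structure of the proof of Lemma \ref{lem:noundirectedcycleS}: assume there is a chordless cycle $\rho: V_1, \ldots, V_n = V_1$ in $H$ after line 11 that has an edge $V_i \bn V_{i+1}$ (i.e. a block at the $V_{i+1}$ end, pointing ``into'' the cycle the wrong way) but no edge of the form $V_j \nb V_{j+1}$ anywhere along the cycle. Traversing the cycle from $V_{i+1}$ onward, every edge then has the form $V_k \bo V_{k+1}$ or $V_k - V_{k+1}$ (no block at the far end), and in particular the edge $V_{i-1} \bo V_i$ or $V_{i-1} - V_i$ comes ``into'' $V_i$ with no block there. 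So we have a configuration in which $V_i$ has a block on one incident cycle-edge and no block on the other, with the rest of the cycle carrying blocks only in the ``forward'' direction. I would first dispose of the trivial small cases ($n=3$, $n=4$) directly, then handle the general case.

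The key step is to apply rule R3 around the cycle. Consider the subroute $V_{i-1} \no \cdots \no V_{i+1}$ going the ``long way'' around $\rho$ (not through $V_i$); together with the two edges incident to $V_i$ this forms the chordless cycle $\rho$ itself. Since $V_i \bn V_{i+1}$ is in $H$ and (by the contradiction hypothesis) there is no forward block on the other side, I would chase blocks step by step: using R2 where a block already exists and R1 where the relevant separating set excludes the middle node, one propagates a block along the cycle until R3 becomes applicable to the chordless cycle $\rho$ together with a chord-like pair of edges $V_j \bn V_{j-1}$, $V_j \bn V_{j+1}$, which would force a block $V_k \nb V_{k+1}$ somewhere — contradicting the hypothesis. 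More carefully: because the algorithm in Table \ref{tab:algorithmR} has been run to closure with R1--R4 before line 5 and then R2--R4 after (and lines 10--11 only add blocks and re-close under R2--R3), no new firing of R1--R4 is possible; so the existence of the asymmetric block pattern on the chordless cycle $\rho$ must already have triggered R3 (which, applied to two ``$\bn$'' edges sharing an endpoint that are joined by a chordless path, adds a block at the far end of that path), yielding the desired $V_j \nb V_{j+1}$.

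The main obstacle I anticipate is bookkeeping the interaction between the two orientations of ``circled'' ends: an edge $V_k \bo V_{k+1}$ in a chordless cycle could be genuinely undirected, genuinely blocked at $V_{k+1}$, or undecided, and R3 as stated requires the ``$|$'' (block) at the near ends of both edges meeting at the apex node. So the real work is showing that, starting from the single known block $V_i \bn V_{i+1}$, one can always propagate blocks in a consistent direction around the cycle until two such blocks meet at a common node with a chordless connecting path — i.e. that the propagation cannot ``stall'' at an edge that stays genuinely two-circled. This is exactly where Lemma \ref{lem:noundirectedcycleS} is needed: it rules out the obstructing pattern $A \bn B - C$ with $A - C$, which is precisely the local configuration that would otherwise block the R1/R2 step from firing. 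With Lemma \ref{lem:noundirectedcycleS} in hand, each step of the propagation is forced, and after finitely many steps R3 fires and produces the contradicting backward block. I would also need to check the base/edge cases of the induction (short cycles, and the case $n = i+1$ where $V_{i+1}$ is adjacent to $V_{i-1}$), but these reduce to direct appeals to R2 or R3 exactly as in cases 2.1--2.4 and 3.1--3.4 of the previous lemma.
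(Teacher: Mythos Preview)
Your overall strategy --- argue by contradiction, propagate blocks around the cycle via R1/R2, and then close with R3 --- is exactly the paper's. Two details, however, are wrong and would derail the execution.

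First, your description of R3 is not what R3 says. R3 does \emph{not} take ``two $\bn$ edges sharing an endpoint joined by a chordless path'' as antecedent; its antecedent is a single chordless block-path $A \bo W_1 \bo \cdots \bo W_k \bo B$ together with a direct edge $A \oo B$, and its effect is to add the block $A \bo B$ on that direct edge. In the actual argument for $n>3$, once propagation has put every edge in the form $V_k \bo V_{k+1}$, one applies R3 with $A=V_{k+1}$, $B=V_k$, and the long way around $\rho$ as the block-path; this adds a block at $V_{k+1}$ on the edge $V_k - V_{k+1}$, so every edge becomes $V_k \bb V_{k+1}$. The contradiction is then directly with the assumed $V_i \bn V_{i+1}$ (no block at $V_{i+1}$), not with the hypothesis ``no $V_j \nb V_{j+1}$'' via some new backward block. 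The configuration ``$V_j \bn V_{j-1}$ and $V_j \bn V_{j+1}$'' that you try to produce never arises from the propagation.

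Second, you misplace the role of Lemma~\ref{lem:noundirectedcycleS}. For cycles of length greater than three, chordlessness makes $V_l$ and $V_{l+2}$ non-adjacent, so one of R1 or R2 fires unconditionally at each propagation step; the obstructing triangle pattern you worry about cannot occur, and the lemma is simply not needed there. Where it \emph{is} needed is precisely the $n=3$ case, which is not ``trivial'': the paper uses Lemma~\ref{lem:noundirectedcycleS} to exclude the form $V_1 \bn V_2 - V_3$ with $V_1 - V_3$, and then a short case analysis with R3 to exclude the remaining two forms. A separate $n=4$ case is unnecessary.
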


\begin{proof}
The proof is an adaptation of the proof of Lemma 6 by \citet{Penna2014}. Assume for a contradiction that $\rho$ is of the length three such that $V_{1} \bn V_{2}$ occur and neither $V_{2} \nb V_{3}$ nor $V_{1} \bn V_{3}$ occur. Note that $V_{2} \bb V_{3}$ cannot occur either because, otherwise, $V_{1} \bn V_{3}$ or $V_{1} \bb V_{3}$ must occur by R3. Since the former contradicts the assumption, then the latter must occur. However, this implies that $V_{1} \bb V_{2}$ must occur by R3, which contradicts the assumption. Similarly, $V_{1} \bb V_{3}$ cannot occur either. Then, $\rho$ is of one of the following forms:

\begin{table}[H]
\centering
\scalebox{0.75}{
\begin{tabular}{ccc}
\begin{tikzpicture}[inner sep=1mm]
\node at (0,0) (A) {$V_1$};
\node at (1,0) (B) {$V_{2}$};
\node at (2,0) (C) {$V_{3}$};
\path[|-] (A) edge (B);
\path[-] (B) edge (C);
\path[-] (A) edge [bend left] (C);
\end{tikzpicture}
&
\begin{tikzpicture}[inner sep=1mm]
\node at (0,0) (A) {$V_1$};
\node at (1,0) (B) {$V_{2}$};
\node at (2,0) (C) {$V_{3}$};
\path[|-] (A) edge (B);
\path[o-] (B) edge (C);
\path[-|] (A) edge [bend left] (C);
\end{tikzpicture}
&
\begin{tikzpicture}[inner sep=1mm]
\node at (0,0) (A) {$V_1$};
\node at (1,0) (B) {$V_{2}$};
\node at (2,0) (C) {$V_{3}$};
\path[|-] (A) edge (B);
\path[|-] (B) edge (C);
\path[-] (A) edge [bend left] (C);
\end{tikzpicture}
\end{tabular}}
\end{table}

The first form is impossible by Lemma \ref{lem:noundirectedcycleS}. The second form is impossible because, otherwise, $V_{2} \ob V_{3}$ would occur by R3. The third form is impossible because, otherwise, $V_{1} \bn V_{3}$ would be occur by R3. Thus, the lemma holds for cycles of length three.

Assume for a contradiction that $\rho$ is of length greater than three and has an edge $V_{i} \bn V_{i+1}$ but no edge $V_{j} \nb V_{j+1}$. Note that if \mbox{$V_l \bo V_{l+1} \oo V_{l+2}$} is a subroute of $\rho$, then either $V_{l+1} \bo V_{l+2}$ or $V_{l+1} \nb V_{l+2}$ is in $\rho$ by R1 and R2. Since $\rho$ has no edge $V_j \nb V_{j+1}$, $V_{l+1} \bo V_{l+2}$ is in $\rho$. By repeated application of this reasoning together with the fact that $\rho$ has an edge $V_i \bn V_{i+1}$, we can conclude that every edge in $\rho$ is $V_k \bo V_{k+1}$. Then, by repeated application of R3, observe that every edge in $\rho$ is $V_{k} \bb V_{k+1}$, which contradicts the assumption.
\end{proof}

\begin{lemma}\label{lem:nosd}
After line 11, $H$ can be oriented as indicated in line 14 without creating a semidirected cycle.
\end{lemma}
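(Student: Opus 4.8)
The plan is a proof by contradiction. Suppose that, after the orientation carried out in line 14, $H$ has a semidirected cycle, and let $\rho: V_1 \ra V_2 \no \cdots \no V_n = V_1$ be a shortest one, traversed so that every directed edge of $\rho$ points forward, i.e.\ has the form $V_k \ra V_{k+1}$. First I would show that $\rho$ is simple and chordless: a repeated vertex splits $\rho$ into two shorter closed routes, one of which still carries a directed edge of $\rho$ and is therefore a shorter semidirected cycle; and given a chord $\{V_i,V_j\}$, whichever orientation line 14 gave that edge, it can be traversed forward inside one of the two arcs of $\rho$, producing a strictly shorter semidirected cycle. Either way minimality of $\rho$ is contradicted, so $\rho$ is simple and chordless.

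Next I would read $\rho$ off the state of $H$ immediately after line 11, before the orientation. An edge that line 14 turns into $V_k \ra V_{k+1}$ must have been $V_k \bn V_{k+1}$, with a block only at $V_k$; an edge that stays undirected was $V_k \bb V_{k+1}$ or carried no block; and no edge of $\rho$ was $V_k \nb V_{k+1}$, since that edge would have been oriented backward. Hence, reading $\rho$ forward, every edge is either blocked at its trailing end (the $\bn$ and $\bb$ cases) or carries no block at all, and at least one edge (a directed one) is of the first kind.

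The core of the argument is to contradict this using the fact that, after line 11, $H$ is stable under R2 and R3, and also still under R1; this last because R1 fires only on patterns determined by adjacencies and by the sets $S_{AB}$, neither of which changes after line 4, while all later lines merely add blocks, which cannot undo an R1 stabilization. Assume first that $\rho$ has at least four distinct vertices. If $\rho$ carries no blockless edge, pick a directed edge $e_1 = V_1 \bn V_2$; then $\rho$ with $e_1$ removed is a path $V_2 \bn V_3 \bn \cdots \bn V_1$ which, together with $e_1$ as the closing edge and because $\rho$ is chordless, is an induced subgraph matching the antecedent of R3, so R3 would place a block at $V_2$ on $e_1$, turning it into $V_1 \bb V_2$; impossible, since $e_1$ had a block only at $V_1$. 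If instead $\rho$ carries a blockless edge, take a maximal run of consecutive edges each blocked at its trailing end; this run is non-empty (it contains the directed edges) and proper, and just past its forward end $\rho$ reads $V_b \bn V_{b+1}$ (or $V_b \bb V_{b+1}$) followed by a blockless edge $V_{b+1} - V_{b+2}$, with $V_b$ and $V_{b+2}$ non-adjacent since $\rho$ is chordless with at least four vertices. Stability of R2 then forces $V_{b+1} \notin S_{V_bV_{b+2}}$, whereupon R1 would add a block to $V_{b+1} - V_{b+2}$; impossible again.

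It remains to handle the case where $\rho$ is a triangle, in which the ``outer'' vertices above collapse onto a third cycle vertex. Here R3 still disposes of every subcase in which one of the two non-directed edges of the triangle carries a block; the only surviving configuration, one directed edge $A \bn B$ together with two blockless undirected edges $B - C$ and $C - A$, is exactly the one excluded by Lemma~\ref{lem:noundirectedcycleS}. The hard part will be this last case analysis: enumerating the admissible block patterns at the junction between a blocked run and a blockless edge, and within the triangle, and verifying in each that one of R1, R2, R3 must already have fired, together with the easily-overlooked point that R1, though not re-applied in line 11, is still in force there. Lemma~\ref{lem:oppositeblockR} offers an alternative opening move (a chordless cycle with a forward block also has a backward block, which after line 14 can only be an undirected $\bb$ edge), but it leaves essentially the same casework to finish.
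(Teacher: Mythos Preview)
Your argument is correct, but it is much longer than the paper's, and the reason is a misreading of Lemma~\ref{lem:oppositeblockR}. The paper's proof is exactly your opening move: take a shortest semidirected cycle, use the chord-splitting argument to make it chordless, and then invoke Lemma~\ref{lem:oppositeblockR}. That lemma finishes the proof immediately, with no residual casework. In the paper's notation $V_j \nb V_{j+1}$ means a block \emph{only} at $V_{j+1}$ (the symbols $\bn$, $\nb$, $\bb$ are three mutually exclusive edge types), so after line~14 such an edge becomes $V_j \la V_{j+1}$, a backward arrow in a cycle all of whose edges point forward; contradiction. Your reading, that the backward block could sit on a $\bb$ edge and hence survive as an undirected edge, is not what the lemma says.

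What you have done instead is a self-contained derivation using R1--R3 and Lemma~\ref{lem:noundirectedcycleS}, which in effect re-proves the relevant instance of Lemma~\ref{lem:oppositeblockR} from scratch. The pieces are sound: your observation that R1 remains saturated after line~4 (because later lines only add blocks and never change adjacencies or the sets $S_{AB}$) is correct and is exactly why the paper can appeal to R1 freely in the proofs of Lemmas~\ref{lem:noundirectedcycleS} and~\ref{lem:oppositeblockR}; your maximal-run / R2-then-R1 argument at the boundary of a blockless edge is valid; and the triangle case is handled by R3 whenever at least two of the three edges carry a trailing block, leaving only the single-$\bn$-plus-two-blockless pattern that Lemma~\ref{lem:noundirectedcycleS} forbids. (Your phrasing ``the two non-directed edges'' suggests exactly one directed edge, but your R3 argument actually covers all patterns with two or more blocked edges, so the logic survives the sloppy wording.) The upshot: nothing is wrong, but you have reinvented the wheel that Lemma~\ref{lem:oppositeblockR} already supplies.
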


\begin{proof}
Assume to the contrary that the orientation produces a semidirected cycle $\rho:V_1, \ldots, V_n$. Note that $\rho$ must have a chord because, otherwise, $\rho$ is impossible by Lemma \ref{lem:oppositeblockR}. Specifically, let the chord be between $V_i$ and $V_j$ with $i < j$. Then, divide $\rho$ into the cycles $\rho_L: V_1, \ldots, V_i, V_j, \ldots, V_n=V_1$ and $\rho_R: V_i, \ldots, V_j, V_i$. Note that $\rho_L$ or $\rho_R$ is a semidirected cycle but shorter than $\rho$. By repeated application of this reasoning, we can conclude that the orientation produces a chordless semidirected cycle, which contradicts Lemma \ref{lem:oppositeblockR}.
\end{proof}

\begin{lemma}\label{lem:triplexS}
After line 11, $H$ can be oriented as indicated in line 14 without creating a triplex that is not in $G^*$.
\end{lemma}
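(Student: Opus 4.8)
The plan is to argue by contradiction, first translating ``creating a triplex'' into a statement about blocks. Lines 10--11 change no adjacency, so $H$ and $G^*$ have the same adjacencies; and lines 10--11 only add blocks (line 10 adds a block at $Y$, and R2--R3 only append blocks), so the blocks of $G^*$ form a subset of the blocks of $H$. When $H$ is oriented as in line 14, an edge becomes directed away from its unique blocked end and undirected when both its ends are blocked; hence a triple $(A,B,C)$ is a triplex in the oriented $H$ exactly when $A$ and $C$ are non-adjacent, the $A$--$B$ edge carries a block at $A$, the $B$--$C$ edge carries a block at $C$, and at least one of these two edges carries no block at $B$ (this last requirement is what yields an arrowhead at $B$). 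Suppose such a triplex is not present in $G^*$. The non-adjacency of $A$ and $C$ and, by monotonicity of the block set, the ``no block at $B$'' requirement are inherited by $G^*$, so the only way $(A,B,C)$ can fail to be a triplex in $G^*$ is that the block at $A$ on the $A$--$B$ edge, or the block at $C$ on the $B$--$C$ edge, is created during lines 10--11. Fixing a sequence of the block additions performed by lines 10--11 (as in the proof of Lemma~\ref{lem:noundirectedcycleS}), I would take the \emph{first} addition that completes such a configuration; by the symmetry between the two edges, assume it adds the block at $A$ on the $A$--$B$ edge, so that at that moment the block at $C$ on the $B$--$C$ edge is already present and $A$ and $C$ are non-adjacent.

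The core of the proof is then a case analysis on which step created that block, following the template of the proof of Lemma~\ref{lem:noundirectedcycleS}. If it was added in line 10, then $A=Y$ and $B=X$, so in $G^*$ the edge was $B\ra A$ and, together with the edge from $C$, $G^*$ contains the directed path $C\ra B\ra A$ with $A$ and $C$ non-adjacent and no triplex at $B$. I would then observe that the block at $B$ on $B\ra A$ must itself have been produced in $G^*$ by one of R1--R4, and re-examine that rule with the $A$--$B$ edge now undirected and with $C$ present, using that R1--R4 were applied exhaustively in lines 4 and 6 and that $G^*$ avoids the subgraph of \citet[Lemma~9]{SonntagandPenna2015}, to derive a contradiction (either some rule should already have fired in $G^*$, or the triple was already a triplex there). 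If the block was added by R2, then just before that application a node $Z$ adjacent to $A$ carried a block on the edge $Z$--$A$, the $A$--$B$ edge was unblocked, and $A\in S_{ZB}$; pushing this configuration back toward $G^*$ and again invoking exhaustiveness of R1--R2 in lines 4 and 6 (and Lemma~9 of \citet{SonntagandPenna2015} for the triangular sub-case) yields a contradiction. If the block was added by R3, the $A$--$B$ edge closes a chordless cycle along which blocks have propagated; here I would reuse the route-chasing device from Case~3 of the proof of Lemma~\ref{lem:noundirectedcycleS}, peeling off possible chords from $C$ to interior nodes of the route one at a time, to conclude that R3 (or R1/R2) would already have oriented the $A$--$B$ edge in $G^*$. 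Lemmas~\ref{lem:oppositeblockR} and~\ref{lem:noundirectedcycleS} are used throughout to dispose of the degenerate cyclic sub-configurations, and Lemma~\ref{lem:nosd} to know that the oriented $H$ is genuinely a CG.

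I expect the R3 case to be the main obstacle, just as in Lemma~\ref{lem:noundirectedcycleS}: the block that ends up at $A$ may have been generated far away along a long route whose interior nodes can be adjacent to $B$ or to $C$, and one has to argue carefully that every such adjacency forces an earlier application of R3 (or R1/R2), so the triplex-forming block is not genuinely new. The line-10 and R2 cases are shorter variations on the same idea---trace the new block back to a configuration already present in $G^*$ and contradict the closure of $G^*$ under R1--R4 together with \citet[Lemma~9]{SonntagandPenna2015}. Finally, combining this lemma with Lemma~\ref{lem:nosd} and the test in line 12, one obtains that whenever line 12 does not fire the oriented $H$ is a CG Markov equivalent to $G^*$ that contains the edge $X-Y$, which is exactly what is needed for the correctness of the algorithm in Table~\ref{tab:algorithmS}.
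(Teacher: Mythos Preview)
Your plan follows the template of Lemma~\ref{lem:noundirectedcycleS}---trace the offending block back through line~10 and R2/R3 to a configuration in $G^*$ and contradict closure under R1--R4 there. But the paper's proof works in the \emph{opposite} direction, and this is precisely the idea your sketch misses. The key observation is that if $(A,B,C)$ is a pretriplex of $H$ that is absent from $G^*$, then since $A$ and $C$ are non-adjacent and $G^*$ has no triplex $(A,B,C)$, the middle node $B$ must lie in $S_{AC}$ (otherwise R1 in line~4 would already have produced the triplex in $G^*$). But then R2 is applicable in $H$, and since line~11 applies R2 exhaustively, R2 has already fired and turned the configuration into $A \bb B \bb C$ or $A \bn B \bn C$, neither of which is a pretriplex. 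That is the entire argument: two or three short cases depending on the form of the pretriplex, with no backtracking into $G^*$ and no route-chasing. Your anticipated ``main obstacle'' (the R3 case) simply does not arise.

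Your backward strategy, by contrast, does not close. Take your line-10 case: you arrive at $G^*$ containing $C \ra B \ra A$ with no triplex at $B$, and then propose to examine which of R1--R4 produced the block at $B$ on $B \ra A$. But that block is perfectly consistent with R2 having fired in $G^*$ on the configuration $C \bn B$, $B - A$, $B \in S_{CA}$; nothing is contradicted. The contradiction comes only from the \emph{new} block added in line~10, which enables a \emph{new} R2 application in line~11 that kills the pretriplex---and that is a forward argument in $H$, not a backward one in $G^*$. The same problem recurs in your R2 case: when you push the auxiliary block at $Z$ back toward $G^*$, if it too was added in lines 10--11 your induction has nothing to recurse on, since your ``first completing addition'' is indexed by pretriplexes, not by arbitrary block additions. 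A smaller issue: your characterization of a pretriplex requires a block at $C$ on $B$--$C$, which omits the form $A \bn B - C$ (no blocks on $B$--$C$), handled as case~(2) in the paper's line-11 analysis.
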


\begin{proof}
We call pretriplex to an induced subgraph of $G^*$ or $H$ that results in a triplex when $G^*$ or $H$ are oriented as indicated in line 14. Note that $G^*$ and $H$ have the same pretriplexes after line 9. Assume to the contrary that after line 11 $H$ has a pretriplex that is not in $G^*$. Assume that the spurious pretriplex is created in line 10 when $A \bn B$ becomes $A \bb B$. Then, after line 11 $H$ has a pretriplex (1) $A \bb B \nb C$ or (2) $C \bn A \bb B$. Case (1) implies that $H$ has actually an induced subgraph $A \bb B \bb C$ by R2, which is a contradiction. To see that R2 is applicable, note that $B \in S_{AC}$ because $G^*$ does not have a triplex $(A,B,C)$. Case (2) implies that $H$ has actually an induced subgraph $C \bb A \bb B$ by R2, which again is a contradiction. As before, R2 is clearly applicable. Finally, assume that the spurious pretriplex is created in line 11. Then, after line 11 $H$ has an induced subgraph (1) $A \bn B \nb C$, (2) $A \bn B - C$ or (3) $A \bn B \bb C$. However, this implies that $H$ has actually an induced subgraph $A \bb B \bb C$ or $A \bn B \bn C$ by R2, which again is a contradiction. As before, R2 is clearly applicable.
\end{proof}

\begin{lemma}\label{lem:nsu}
After line 14, the undirected edges in $G^*$ that had no blocks after line 7 are not strong.
\end{lemma}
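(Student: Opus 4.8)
The plan is to prove the statement directly in the form: for each undirected edge $X - Y$ of $G^*$ (after line 14) that carried no block after line 7, exhibit a CG equivalent to $G^*$ in which $X - Y$ is directed. Since all CGs in the equivalence class have the same adjacencies, producing one such CG is exactly what is needed to conclude that $X - Y$ is not strong. The witness will be obtained from $G^*$ by orienting, inside the chain component $T$ containing $X$ and $Y$, a cut of $T$ into an upper part $U \ni X$ and a lower part $L \ni Y$; showing that this is a feasible split --- the inverse of the feasible merging of Section \ref{sec:minimallyoriented} --- then yields equivalence to $G^*$ by Lemma 2 of \citet{SonntagandPenna2015} read backwards. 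If a single feasible split does not yet separate $X$ from $Y$, I will iterate, directing strictly more edges at each step.

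The first step is to extract the structural meaning of ``no block after line 7''. Since R1 is applied until it cannot fire (line 4), and on a path $Z, X, Y$ with $Z \not\sim Y$ and $X \notin S_{ZY}$ it blocks the end at $Z$ of the edge $ZX$ and the end at $Y$ of the edge $XY$ (and symmetrically with $XY$ in the other position), a blockless $X - Y$ cannot be an arm of a triplex: the equivalence class has no triplex $(Z,X,Y)$ and no triplex $(X,Y,W)$. Feeding this into R2 --- which, given an edge $WY$ carrying a block at $W$, an edge $YX$, and $Y \in S_{WX}$, blocks the end at $Y$ of $YX$ --- forces every parent of $Y$ in $G^*$ to be adjacent to $X$, and symmetrically; together with R3 this pins down the configuration around $X - Y$, making $X$ and $Y$ behave essentially as twins inside $T$. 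Finally, because line 5 runs and is followed by R2--R4 to exhaustion (line 6), $X - Y$ lies in no chordless cycle of length greater than three whose edges are all blockless; hence in $T$ the $\bb$-edges form rigid blocks around which the blockless edges are arranged chordally.

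For the construction, orient $X - Y$ and propagate the orientation through $T$: whenever an edge $A \ra B$ with $A, B \in T$ has been committed and some $W \in T$ has $A \not\sim W$ and $B - W$ still undirected, orient $B - W$ away from $B$, since otherwise $A \ra B - W$ would be a triplex not present in $G^*$. I will argue from the previous paragraph that at least one of the two orientations of $X - Y$ propagates without wrapping back around to $X$; a double wrap-around would force $X - Y$ to be undirected in every equivalent CG, which, through R3 and line 5, would contradict blocklessness. Choosing that orientation, the propagation defines $L$ ($Y$ together with everything forced below it) and $U = T \setminus L$, and one orients every edge between $U$ and $L$ as $U \ra L$. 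It then remains to verify that (i) $X \notin L$; (ii) no $\bb$-edge of $T$ is cut, i.e. each maximal set of nodes of $T$ joined by $\bb$-edges lands entirely in $U$ or entirely in $L$; and (iii) $(U,L)$ satisfies the four conditions defining a feasible merging, so that merging $U$ and $L$ returns $G^*$. The resulting oriented graph is then a CG equivalent to $G^*$ that contains $X \ra Y$, so $X - Y$ is not strong.

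I expect step (iii) to be the main obstacle. Condition 4 (no strict descendant of $U$ is a parent of $L$) is automatic, because a parent of $T$ cannot be a descendant of $T$ in $G^*$. Condition 1 (each node of $U$ adjacent to $L$ is adjacent to all of $L$) and especially condition 2 (those nodes form a complete set) are where line 5 is used: a missing edge inside that interface would close into a chordless cycle mixing nodes of $U$ and $L$, which after line 5 carries a $\bb$-edge and hence, by (ii), could not have been cut. Condition 3 (every parent of such an interface node is a parent of every node of $L$) is the subtle one: a parent $P$ of an interface node $A$ that is not a parent of some $Y' \in L$ would create a triplex on $P \ra A$, $A - Y'$ with $P \not\sim Y'$, and ruling this out requires combining the propagation above with the fact that $X$ and $Y$ share their parents. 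Justifying the ``no double wrap-around'' claim of the third paragraph is the other place where care is needed, and I would establish it by showing that a wrap-around on both sides forces a $\bb$ onto the $X - Y$ edge via R3 and line 5.
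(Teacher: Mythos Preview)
Your single-split construction has a concrete gap. Take $G^*$ to be the path $A - B - C - D$ with all three edges blockless, and pick $X=B$, $Y=C$. Your propagation forces $C \to D$ (since $B \not\sim D$), so $L=\{C,D\}$ and $U=\{A,B\}$. Orienting only the $U$--$L$ edges as you describe gives $A - B \to C - D$. But condition~1 of a feasible merge requires $L \subseteq Ch(B)$, i.e.\ $B \to D$, which fails since $B \not\sim D$; equivalently, $A - B \to C - D$ has a triplex $(B,C,D)$ absent from $G^*$, so it is not equivalent. Your stated justification for condition~1---that a missing interface adjacency closes a chordless cycle and is therefore blocked by line~5---breaks down here because the missing edge $B\sim D$ creates no cycle at all. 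The iteration you allude to would have to peel $L$ off one layer at a time (first split off $\{D\}$, then $\{C\}$), not the whole propagated $L$ at once; that is a different construction, and it needs its own argument that each layer is a feasible split and that the process terminates with $X$ and $Y$ separated. None of that is supplied.

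The paper takes a shorter route that avoids feasible-split bookkeeping entirely. It observes that the graph $U$ of blockless edges is chordal (this is precisely what line~5 enforces), and therefore admits a perfect elimination ordering starting at any vertex. Orienting $U$ by the MCS algorithm along such an ordering and adding the resulting DAG to the blocked part of $G^*$ yields an equivalent CG: no new triplex arises at the interface because R1--R2 would already have blocked an edge of $U$ meeting a blocked edge at a non-adjacent pair, and no semidirected cycle arises by a chord-reduction argument together with Lemma~9 of \citet{SonntagandPenna2015}. The key point is then that MCS may start at \emph{any} node, so each blockless edge is oriented in both directions across the family of equivalent CGs so produced; hence it is not strong.
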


\begin{proof}
The proof is an adaptation of the proof of Theorem 11 by \citet{SonntagandPenna2015}. Let $F$ denote the graph that contains all and only the edges of $G^*$ resulting from the replacements in line 14, and let $U$ denote the graph that contains the rest of the edges of $G^*$ after line 14. Note that all the edges in $U$ are undirected and they had no blocks when line 14 was to be executed. Therefore, $U$ has no cycle of length greater than three that is chordless by line 5. In other words, $U$ is chordal. Then, we can orient all the edges in $U$ without creating triplexes nor directed cycles by using, for instance, the maximum cardinality search (MCS) algorithm \cite[p. 312]{KollerandFriedman2009}. Consider any such orientation of the edges in $U$ and denote it $D$. Now, add all the edges in $D$ to $F$. As we show below, this last step does not create any triplex or semidirected cycle in $F$:

\begin{itemize}
\item It does not create a triplex $(A,B,C)$ in $F$ because, otherwise, $A - B \ob C$ must exist in $G^*$ when line 14 was to be executed, which implies that $A \bo B$ or $A \ob B$ was in $G^*$ by R1 or R2 when line 14 was to be executed, which contradicts that $A - B$ is in $U$.

\item Assume to the contrary that it does create a semidirected cycle $\rho$ in $F$. We can assume without loss of generality that $\rho$ is chordless because if it has a chord between $V_i$ and $V_j$ with $i < j$. Then, divide $\rho$ into the cycles $\rho_L: V_1, \ldots, V_i, V_j, \ldots, V_n=V_1$ and $\rho_R: V_i, \ldots, V_j, V_i$. Note that $\rho_L$ or $\rho_R$ is a semidirected cycle but shorter than $\rho$. By repeated application of this reasoning, we can conclude that $F$ has a chordless semidirected cycle.

Since $D$ has no directed cycles, $\rho$ must have a $\bn$ or $\bb$ edge when line 14 was to be executed. The former case is impossible \cite[Lemma 10]{SonntagandPenna2015}. The latter case implies that $A - B \bb C$ must exist in $G^*$ when line 14 was to be executed, which implies that $A$ and $C$ are adjacent in $G^*$ because, otherwise, $A \bo B$ or $A \ob B$ was in $G^*$ by R1 or R2 when line 14 was to be executed, which contradicts that $A - B$ is in $U$. Then, $A \bo C$ or $A \ob C$ exists in $G^*$ when line 14 was to be executed \cite[Lemma 9]{SonntagandPenna2015}, which implies that $A \bo B$ or $A \ob B$ was in $G^*$ by R3 when line 14 was to be executed, which contradicts that $A - B$ is in $U$.
\end{itemize}

Consequently, $F$ is a CG that is Markov equivalent to $G$. Finally, let us recall how the MCS algorithm works. It first unmarks all the nodes in $U$ and, then, iterates through the following step until all the nodes are marked: Select any of the unmarked nodes with the largest number of marked neighbors and mark it. Finally, the algorithm orients every edge in $U$ away from the node that was marked earlier. Clearly, any node may get marked first by the algorithm because there is a tie among all the nodes in the first iteration, which implies that every edge may get oriented in any of the two directions in $D$ and thus in $F$. Therefore, either orientation of every edge of $U$ occurs in some CG $F$ that is Markov equivalent to $G$. Then, every edge of $U$ must be a strong undirected edge in $G^*$.
\end{proof}

\begin{theorem}\label{the:correct}
Table \ref{tab:algorithmS} identifies all and only the strong edges in $G^*$.
\end{theorem}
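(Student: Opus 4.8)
\section*{Proof plan}

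The plan is to prove the two directions of the claim separately --- that every edge Table~\ref{tab:algorithmS} labels is strong, and that every strong edge gets labelled --- and within each direction to treat the undirected edges apart from the directed ones. Note first that lines~7--13 only attach labels (operating on a scratch copy $H$) and do not alter the blocks of $G^*$, so after line~14 the graph $G^*$ is exactly the EG that line~7 of Table~\ref{tab:algorithmR} would produce; its undirected edges are precisely the $\bb$ edges together with the edges that had no block after line~7, and its directed edges are precisely the former $\bn$ edges. The undirected case is then short. A $\bb$ edge has a block at each end, and the blocks produced in lines~1--6 are sound, i.e.\ respected by every CG equivalent to $G^*$; this soundness is part of the correctness of the algorithm of Table~\ref{tab:algorithmR} \citep{SonntagandPenna2015}. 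Hence such an edge is undirected in every equivalent CG, so it is strong, and these are exactly the edges line~7 labels. Conversely, the undirected edges of $G^*$ that are not labelled are exactly those with no block after line~7, and Lemma~\ref{lem:nsu} says these are not strong.

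For the directed edges, fix an edge that is $X \bn Y$ just before line~14; it becomes $X \ra Y$, and since this edge is in the EG no equivalent CG has $Y \ra X$, so $X \ra Y$ is strong if and only if no equivalent CG contains $X - Y$. For soundness of line~13, assume it fires: $G^*$ has an induced subgraph $A \bn B \ob C$ while $H$, after line~11, has $A \bb B \bb C$, so the EG contains a triplex $(A,B,C)$ (after line~14 it shows $A \ra B$ together with $B-C$ or $B \la C$, and $A,C$ are non-adjacent). Were some equivalent CG $M$ to contain $X - Y$, it would respect every block of $G^*$, the block added at $Y$ in line~10 (since $M$ has $X-Y$), and --- R2 and R3 being sound --- every block derived in line~11; so $M$ would have $A-B$, $B-C$ and the adjacencies of $G^*$, hence would lack the triplex $(A,B,C)$, contradicting that $M$ is Markov equivalent to $G^*$; therefore $X \ra Y$ is strong. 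For completeness, assume line~13 does not fire; I would produce an equivalent CG containing $X - Y$. By Lemmas~\ref{lem:nosd} and~\ref{lem:triplexS}, orienting $H$ as in line~14 creates neither a semidirected cycle nor a triplex outside $G^*$; the crux step is to show that it also destroys no triplex of $G^*$, so that the orientation of $H$ has the same adjacencies and triplexes as $G^*$ and no semidirected cycle. Granting this, one repeats the argument in the proof of Lemma~\ref{lem:nsu} with $H$ in place of $G^*$, orienting the remaining undirected edges of $H$ by maximum cardinality search into a CG $M$ that is Markov equivalent to $G^*$; the edge $X \bb Y$ stays undirected throughout, so $M$ has $X-Y$ and $X \ra Y$ is not strong.

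I expect the main obstacle to be the crux step: if line~13 does not fire, orienting $H$ as in line~14 destroys no triplex of $G^*$. The argument should go by cases over the three shapes an induced triplex $(A,B,C)$ of $G^*$ can take ($A\ra B\la C$, $A\ra B-C$, $A-B\la C$), using Lemma~\ref{lem:nsu} to rule out that an edge inside the triplex is an unblocked edge of $G^*$ (reversing such an edge would already destroy the triplex in some equivalent CG), and using that going from $G^*$ to $H$ only adds blocks and hence can only turn a $\ra$ into a $-$, so that a triplex can be lost only when the $B$-ends of both of its edges acquire blocks. One then checks that, taking as the ``$A$'' of the pattern whichever directed end of the triplex carries a block in $G^*$ at that end but none at $B$ (such an end must exist, else the EG would already show $A-B-C$), the resulting configuration is exactly $G^*$ having $A\bn B\ob C$ and $H$ having $A\bb B\bb C$, so line~12 would have fired. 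A secondary technicality is to verify that the proof of Lemma~\ref{lem:nsu} still goes through with $H$ substituted for $G^*$ --- in particular that the subgraph of $H$ spanned by its unblocked edges is still chordal.
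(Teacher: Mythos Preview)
Your plan is essentially the paper's own argument: handle undirected edges via the definition of the EG plus Lemma~\ref{lem:nsu}, and handle a candidate edge $X\bn Y$ by using Lemmas~\ref{lem:nosd} and~\ref{lem:triplexS} together with the observation that line~12 fires exactly when orienting $H$ as in line~14 would destroy a triplex of $G^*$. Your case analysis for the ``crux step'' is sound, and your soundness argument for line~13 (blocks in $G^*$, the block added in line~10, and the blocks propagated by R2--R3 are all respected by any equivalent CG having $X-Y$) is a useful expansion of what the paper leaves implicit.

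The one place you make life harder than necessary is the completeness direction for directed edges. When line~12 does not fire, the paper simply takes $H$ itself, oriented as in line~14, as the witness CG: Lemma~\ref{lem:nosd} gives absence of semidirected cycles, Lemma~\ref{lem:triplexS} rules out new triplexes, and the failure of line~12 (via your crux step) rules out lost triplexes, so the oriented $H$ is already a CG Markov equivalent to $G^*$ with $X-Y$ undirected. There is no need to invoke MCS or to re-run the argument of Lemma~\ref{lem:nsu} on $H$, and consequently your ``secondary technicality'' about chordality of the unblocked subgraph of $H$ never arises. Dropping that detour both shortens the proof and removes the only point where you expressed uncertainty.
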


\begin{proof}
By definition of EG, the edges in $G^*$ with blocks on both ends in line 7 correspond to strong undirected edges in $G^*$ after line 14. Moreover, the edges in $G^*$ with no blocks in line 7 correspond to non-strong undirected edges in $G^*$ after line 14, by Lemma \ref{lem:nsu}.

After line 11, $H$ can be oriented as indicated in line 14 without creating semidirected cycles by Lemma \ref{lem:nosd}, and without creating a triplex that is not in $G^*$ by Lemma \ref{lem:triplexS}. Therefore, if $H$ can be oriented as indicated in line 14 without destroying any of the triplexes in $G^*$, then the algorithm has found a CG that is Markov equivalent to $G^*$ and such that $X \ra Y$ is in $G^*$ but $X - Y$ is in the CG found and, thus, $X \ra Y$ is non-strong in $G^*$. Otherwise, $X \ra Y$ is strong in $G^*$. This is checked in line 12.
\end{proof}

The algorithm in Table \ref{tab:algorithmS} may be sped up with the help of the rules in Table \ref{tab:rulesS}. S1-3 should be run while possible before line 8, and S4-6 should be run while possible after line 8 to propagate the labellings due to line 13 in the previous iteration.

\begin{table}[t]
\caption{Rules for accelerating the search for strong directed edges in an EG. The antecedents represent induced subgraphs.}\label{tab:rulesS}
\begin{center}
\scalebox{0.75}{
\begin{tabular}{|cccc|}
\hline
S1:&
\begin{tabular}{c}
\begin{tikzpicture}[inner sep=1mm]
\node at (0,0.5) (A) {$A$};
\node at (0,-0.5) (B) {$B$};
\node at (1,0) (C) {$C$};
\node at (2,0) (D) {$D$};
\path[|-] (A) edge (C);
\path[|-] (B) edge (C);
\path[|-] (C) edge (D);
\end{tikzpicture} 
\end{tabular}
& $\Rightarrow$ &
$C \bn D$ is strong\\
&&&\\
\hline
&&&\\
S2:&
\begin{tabular}{c}
\begin{tikzpicture}[inner sep=1mm]
\node at (0,0) (A) {$A$};
\node at (1,0) (B) {$B$};
\node at (2,0) (C) {$C$};
\path[|-] (A) edge (B);
\path[|-|] (B) edge (C);
\end{tikzpicture} 
\end{tabular}
& $\Rightarrow$ &
$A \bn B$ is strong\\
&&&\\
\hline
&&&\\
S3:&
\begin{tabular}{c}
\begin{tikzpicture}[inner sep=1mm]
\node at (0,0) (A) {$A$};
\node at (2,0) (B) {$B$};
\node at (0,1) (C) {$C$};
\node at (2,1) (D) {$D$};
\node at (1,1) (Q) {$\ldots$};
\path[|-] (A) edge (B);
\path[|-o] (A) edge (C);
\path[|-o] (C) edge (Q);
\path[|-o] (Q) edge (D);
\path[|-] (D) edge (B);
\end{tikzpicture} 
\end{tabular}
& $\Rightarrow$ &
$A \bn B$ is strong\\
&&&\\
\hline
&&&\\
S4:&
\begin{tabular}{c}
$A \bn B \bn C$\\
and $A \bn B$ is strong\\
\end{tabular}
& $\Rightarrow$ &
$B \bn C$ is strong\\
&&&\\
\hline
&&&\\
S5:&
\begin{tabular}{c}
\begin{tikzpicture}[inner sep=1mm]
\node at (0,0) (A) {$A$};
\node at (1,0) (B) {$B$};
\node at (0.5,1) (C) {$C$};
\path[|-] (A) edge (B);
\path[|-o] (A) edge (C);
\path[|-] (C) edge (B);
\end{tikzpicture}\\
and $C \bn B$ is strong\\
\end{tabular}
& $\Rightarrow$ &
$A \bn B$ is strong\\
&&&\\
\hline
&&&\\
S6:&
\begin{tabular}{c}
\begin{tikzpicture}[inner sep=1mm]
\node at (0,0) (A) {$A$};
\node at (1,0) (B) {$B$};
\node at (0.5,1) (C) {$C$};
\path[|-] (A) edge (B);
\path[|-] (A) edge (C);
\path[|-o] (C) edge (B);
\end{tikzpicture}\\
and $A \bn C$ is strong\\
\end{tabular}
& $\Rightarrow$ &
$A \bn B$ is strong\\
\hline
\end{tabular}}
\end{center}
\end{table}

\begin{corollary}
Applying the rules in Table \ref{tab:rulesS} to an EG $G^*$ correctly identifies strong directed edges in $G^*$.
\end{corollary}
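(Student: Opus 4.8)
The plan is to prove soundness of the rules in Table~\ref{tab:rulesS}: every edge they label strong really is strong. Completeness is not needed, since the main loop of Table~\ref{tab:algorithmS} still runs on the remaining $\bn$ edges and, by Theorem~\ref{the:correct}, already finds all strong directed edges; the rules of Table~\ref{tab:rulesS} only provide shortcuts. The working criterion would be: a directed edge $X \ra Y$ of $G^*$ is strong iff no CG Markov equivalent to $G^*$ contains $X - Y$. Indeed, $X \ra Y$ being in the EG rules out $X \la Y$ in every equivalent CG, and since adjacencies are preserved under Markov equivalence, an equivalent CG contains $X \ra Y$ or $X - Y$. I would also use freely that Markov equivalence means same adjacencies and same triplexes, that an edge $X \bn Y$ or $X \bo Y$ of $G^*$ becomes a directed edge $X \ra Y$ in the EG and hence is not reversed in any equivalent CG, and that the edges with blocks on both ends are strong undirected edges (the proof of Theorem~\ref{the:correct}).

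The six rules split into two patterns. The first is a triplex count and covers S1, S2, S4. For S1, in an equivalent CG $H$ neither $A \ra C$ nor $B \ra C$ is reversed, and they cannot both be undirected in $H$ (else the triplex $(A,C,B)$ of $G^*$, recall $A\not\sim B$, would be lost); so one of them is directed into $C$ in $H$, and then $C - D$ in $H$ would create the triplex $(A,C,D)$ (or $(B,C,D)$), absent from $G^*$ where $A \ra C \ra D$ --- contradiction, so $C \ra D$ is strong. For S2, $B - C$ is strong undirected, hence undirected in every $H$; $A - B$ in $H$ would destroy the triplex $(A,B,C)$ of $G^*$ ($A \ra B - C$, $A\not\sim C$) --- contradiction. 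For S4, $A \ra B$ is strong by hypothesis, hence in every $H$; $B - C$ in $H$ would create the triplex $(A,B,C)$, absent from $G^*$ where $A \ra B \ra C$ --- contradiction.

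The second pattern is a semidirected-cycle argument and covers S3, S5, S6. Take an equivalent CG $H$ in which, for contradiction, the target edge is undirected. Each of these rules displays a chordless cycle through the target edge all of whose other edges are either directed in the EG --- hence in $H$ oriented along the cycle or left undirected, never reversed --- or already known strong and directed along the cycle. Starting a traversal at such a strong forward edge then produces a route $V_1 \ra V_2 \no \cdots \no V_n=V_1$, i.e.\ a semidirected cycle in $H$, a contradiction. In S5 and S6 the cycle is the displayed triangle, with strong forward edge $C \ra B$, respectively $A \ra C$; in S3 one first uses the triplex $(A,B,D)$ of $G^*$ (valid because the cycle is chordless, so $A\not\sim D$) together with the assumed $A - B$ in $H$ to force $D \ra B$ in $H$, which then plays the role of the strong forward edge around $A, C, \ldots, D, B, A$.

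Since S4--S6 refer to strong labels produced earlier, the argument is really an induction on the order in which labels are assigned, with the triplex and semidirected-cycle arguments above as the inductive step. I expect the fiddliest points to be: verifying in S5/S6 that the known-strong directed edge points the correct way around the triangle so that a genuine semidirected cycle results, and in S3 that an arbitrary number of internal cycle nodes causes no trouble --- which it does not, because no EG-directed edge and no non-adjacency of the chordless cycle can change when passing to $H$. No step appears deep once the two patterns and the non-reversal of EG edges are in place.
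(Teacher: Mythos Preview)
Your proposal is correct and follows essentially the same approach as the paper: each rule is dispatched by the same triplex-preservation or semidirected-cycle argument you describe, and the paper likewise treats S1, S2, S4 via triplexes and S3, S5, S6 via forbidden semidirected cycles. Your explicit framing as two patterns and as an induction on the order of labellings is a tidy presentation choice, but the underlying argument is the same.
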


\begin{proof}
Consider any member $G$ of the equivalence class of $G^*$. Consider the rule S1. Since $G^*$ has a triplex $(A,C,B)$ after line 14, $G$ must have an edge $A \ra C$ or $B \ra C$. In either case $G$ must also have an edge $C \ra D$, since $G^*$ has not a triplex $(A,C,D)$ or $(B,C,D)$.

Consider the rule S2. Since $G^*$ has a triplex $(A,B,C)$ after line 14 and $G$ has an edge $B - C$ due to the blocks at $B$ and $C$, then $G$ must also have an edge $A \ra B$.

Consider the rule S3. Assume to the contrary that $G$ has an edge $A - B$. Then, $G$ must have an edge $D \ra B$ since $G^*$ has a triplex $(A,B,D)$ after line 14. However, this implies that $G$ has a semidirected cycle due to the blocks in the antecedent of the rule, which is a contradiction.

Consider the rule S4. Since $G^*$ has not a triplex $(A,B,C)$ after line 14 and $G$ has an edge $A \ra B$ because it is strong, then $G$ must also have an edge $B \ra C$.

Consider the rule S5. Since $G$ has an edge $C \ra B$ because it is strong, then $G$ must also have an edge $A \ra B$ to avoid having a semidirected cycle, because either $A \ra C$ or $A - C$ is in $G$ due to the blocks in the antecedent of the rule. The rule S6 can be proven similarly.
\end{proof}

The rules in Table \ref{tab:rulesS} are by no means complete, i.e. there may be strong edges that the rules alone do not detect. Thus, additional rules can be created. We doubt though that a complete set of concise rules can be produced. The difficulty lies in the disjunctive nature of some labellings. For instance, let an EG $G^*$ have induced subgraphs $A \ra C \la B$, $A \ra C \ra \cdots \ra D \ra E$ and $B \ra C \ra \cdots \ra D \ra E$. Since $G^*$ has no triplex in $A \ra C \ra \cdots \ra D \ra E$, if a member $G$ of the equivalence class of $G^*$ has an edge $A \ra C$ then it has an edge $D \ra E$. Similarly, if $G$ has an edge $B \ra C$ then it has an edge $D \ra E$. Then, $G$ has an edge $D \ra E$ because it has an edge $A \ra C$ or $B \ra C$, since $G^*$ and thus $G$ has a triplex $(A,C,B)$. Therefore, $D \ra E$ is strong. Although it is easy to produce a rule for this example, many more such disjunctive examples exist and we do not see any way to produce concise rules for all of them.

\section{CAUSAL EFFECT BOUNDS}\label{sec:bounds}

When the true CG is unknown, a causal effect of the form $p(y | do(x))$ with $X, Y \in V$ cannot be computed, but it can be bounded as follows:
\begin{enumerate}
\item Obtain all the CGs that are Markov equivalent to the true one by running the learning algorithm developed by \citet{Penna2014,Penna2016} or \citet{PennaandGomezOlmedo2016}.

\item Compute the causal effect for each CG obtained as follows. Like in a Bayesian network, any causal effect in a CG $G$ is computable uniquely from observed quantities (i.e. it is identifiable) by adjusting for the appropriate variables. Specifically,
\[
p(y | do(x)) = \int p(y | x, z) p(z) dz
\]
where $Z = Ne_{G}(X) \cup Pa_{G}(X \cup Ne_{G}(X))$ and $Y \notin Z$. The role of $Z$ is to block every non-causal path in $G$ between $X$ and $Y$. We call $Z$ the adjusting set in $G$.
\end{enumerate}
Unfortunately, the learning algorithm in step 1 may be too time consuming for all but small domains. At least, this is the conclusion that follows from the experimental results reported by \citet{Sonntagetal.2015} for a similar algorithm for learning Lauritzen-Wermuth-Frydenberg CGs. Instead, we propose the following alternative approach:
\begin{enumerate}
\item[1'.] Learn the EG $G^*$ corresponding to the true CG from data as follows. First, learn a CG from data as shown by \citet{Penna2014,Penna2016} and \citet{PennaandGomezOlmedo2016} and, then, transform it into an EG as shown by \citet[Section 3]{SonntagandPenna2015}.

\item[2'.] Enumerate all the CGs that are Markov equivalent to $G^*$ as shown by \citet[Theorem 3]{SonntagandPenna2015}.

\item[3'.] Compute the causal effect for each CG enumerated as shown above.
\end{enumerate}
This approach has successfully been applied when the causal models are represented by other graphical models than CGs \citep{Hyttinenetal.2015,MalinskyandSpirtes2016,Maathuisetal.2009}. The experimental results reported by \citet{PennaandGomezOlmedo2016} indicate that the learning algorithm in step 1' scales to medium sized domains. However, the enumeration in step 2' may be too time consuming for all but small domains. Alternatively, we may try to enumerate the adjusting sets in the equivalent CGs without enumerating these explicitly. Specifically, we know that all the adjusting sets are subsets of $Ad_{G^*}(X) \cup Ad_{G^*}(Ad_{G^*}(X))$, because all the equivalent CGs have the same adjacencies as $G^*$. Therefore, we can adjust for every subset of $Ad_{G^*}(X) \cup Ad_{G^*}(Ad_{G^*}(X))$ to obtain bounds for the causal effect of interest. True that some of these subsets are not valid adjusting sets in the sense that they do not correspond to any of the equivalent CGs. However, this does not make the bounds invalid, just more loose. The rest of the section studies a case where all and only the valid adjusting sets can be enumerated efficiently.

Assume that we believe a priori that the dependencies in the domain at hand are due to causal rather than non-causal relationships. Then, we believe a posteriori that the true CG is a maximally oriented CG, because such CGs have the fewest undirected edges in the equivalence class of the EG $G^*$ learned from the data in step 1'. Moreover, recall from Section \ref{sec:minimallyoriented} that all of them have the same undirected edges. Therefore, we can bound the causal effect $p(y | do(x))$ by modifying the latter framework above so that only maximally oriented CGs are enumerated in step 2'. A maximally oriented CG that is equivalent to $G^*$ can be obtained from $G^*$ by repeatedly performing feasible splits \citep[Theorem 3]{SonntagandPenna2015}. Unfortunately, this enumeration method may be inefficient for all but small domains. Instead, we show below how to enumerate the adjusting sets in the maximally oriented CGs that are equivalent to $G^*$ without enumerating these explicitly.

Given a node $X \in V$, we define $St_{G^*}(X) = \{A | A - X$ is a strong edge in $G^* \}$ and $Nst_{G^*}(X) = \{A | A - X$ is a non-strong edge in $G^* \}$. Given a set $S \subseteq Nst_{G^*}(X)$, we let $G^*_{S \ra X}$ denote the graph that is obtained from $G^*$ by replacing the edge $A - X$ with $A \ra X$ for all $A \in S$, and replacing the edge $A - X$ with $A \la X$ for all $A \in Nst_{G^*}(X) \setminus S$. Moreover, we say that $G^*_{S \ra X}$ is locally valid if $G^*_{S \ra X}$ does not have any triplex $(A,X,B)$ that is not in $G^*$. The next theorem proves that producing the adjusting sets in the equivalent maximally oriented CGs simplifies to produce locally valid sets.

\begin{theorem}
$G^*_{S \ra X}$ is locally valid if and only if there is a maximally oriented CG $G$ that is equivalent to $G^*$ and such that $Ne_G(X)=St_{G^*}(X)$ and $Pa_G(X)=Pa_{G^*}(X) \cup S$, which implies that the adjusting set in $G$ is $St_{G^*}(X) \cup Pa_{G^*}(X \cup St_{G^*}(X)) \cup S$.
\end{theorem}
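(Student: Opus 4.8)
The plan is to prove the two directions separately, using the characterization of strong undirected edges (they appear in every maximally oriented CG, and all maximally oriented CGs share the same undirected edges, by the results cited in Section~\ref{sec:minimallyoriented}) together with the fact that a CG's adjusting set for $X$ is determined by $Ne_G(X)$ and $Pa_G(X)$ via the formula $Z = Ne_G(X) \cup Pa_G(X \cup Ne_G(X))$.

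For the ``if'' direction, suppose such a maximally oriented CG $G$ exists. Then $Ne_G(X) = St_{G^*}(X)$ means every non-strong undirected edge $A - X$ of $G^*$ has been oriented in $G$, i.e. $A \in Pa_G(X)$ or $A \in Ch_G(X)$; since $Pa_G(X) = Pa_{G^*}(X) \cup S$ and $S \subseteq Nst_{G^*}(X)$, the non-strong edges oriented toward $X$ are exactly those in $S$ and the rest point away from $X$. Hence $G$ restricted to the edges incident to $X$ is exactly $G^*_{S \ra X}$ on those edges (the directed edges of $G^*$ incident to $X$, namely $Pa_{G^*}(X)$ and $Ch_{G^*}(X)$, are unchanged). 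Since $G$ is Markov equivalent to $G^*$, it has the same triplexes; in particular any triplex of the form $(A,X,B)$ in $G$ is in $G^*$, and conversely. A triplex $(A,X,B)$ with $X$ as the middle node depends only on the edges incident to $X$, so $G^*_{S \ra X}$ and $G$ have the same such triplexes, whence $G^*_{S \ra X}$ introduces no triplex $(A,X,B)$ absent from $G^*$; that is, $G^*_{S \ra X}$ is locally valid. The adjusting set claim is then immediate: $Ne_G(X) = St_{G^*}(X)$ and $Pa_G(X) = Pa_{G^*}(X) \cup S$, and one checks $Pa_G(X \cup Ne_G(X)) = Pa_{G^*}(X \cup St_{G^*}(X)) \cup S$ because the parents of $X$ in $G$ are $Pa_{G^*}(X) \cup S$ while the parents of nodes in $St_{G^*}(X)$ are unaffected by the local re-orientation around $X$ (the only edges changed are those incident to $X$, and orienting $A - X$ toward $X$ adds $A$ to $Pa_G(X)$, which is already accounted for).

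For the ``only if'' direction, assume $G^*_{S \ra X}$ is locally valid. The idea is to build the desired $G$ by starting from a maximally oriented CG $G_0$ equivalent to $G^*$ (which exists and can be reached from $G^*$ by feasible splits, by Theorem~3 of \citet{SonntagandPenna2015}) and then adjusting the orientation of the edges incident to $X$ to match $G^*_{S \ra X}$. In $G_0$ all non-strong undirected edges of $G^*$ are oriented and all strong ones remain undirected, so $Ne_{G_0}(X) = St_{G^*}(X)$ already; what may differ is the direction of the edges $A - X$ with $A \in Nst_{G^*}(X)$. I would argue that re-orienting these edges to agree with $S$ (those in $S$ toward $X$, the rest away) keeps the graph a CG equivalent to $G^*$: no triplex centered at $X$ is created or destroyed beyond those already in $G^*$ by local validity and by the fact that $G_0$'s triplexes centered at $X$ are those of $G^*$; no triplex not centered at $X$ is affected since only edges incident to $X$ change; and no semidirected cycle is created because a semidirected cycle through $X$ would have to use two edges incident to $X$, and the configuration around $X$ in the modified graph is a consistent ``source-or-sink-free'' local orientation that, combined with the acyclicity of the rest inherited from $G_0$, cannot close a semidirected cycle --- here one uses that the non-strong neighbors of $X$ are not forced into any directed relation with each other or with $Pa_{G^*}(X)$ or $Ch_{G^*}(X)$ that a re-orientation of the $X$-edges could complete into a cycle (this is essentially the content of local validity plus the structure of maximally oriented CGs). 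Finally, the resulting $G$ is still maximally oriented: it differs from $G_0$ only in the directions of already-directed edges, so it has the same (minimal) set of undirected edges as $G_0$, and by Theorems~4 and~5 of \citet{SonntagandPenna2015} any equivalent CG with that undirected skeleton is maximally oriented.

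The main obstacle is the semidirected-cycle check in the ``only if'' direction: one must show that the local re-orientation of the edges incident to $X$ cannot create a semidirected cycle in $G$. The subtle point is that a re-oriented edge $A \ra X$ or $X \ra A$ could in principle combine with directed paths already present in $G_0$ (outside the neighborhood of $X$) to close a cycle. I would handle this by a careful case analysis on how a hypothetical chordless semidirected cycle through $X$ enters and leaves $X$: it must enter via some neighbor in $St_{G^*}(X) \cup Ch_{G^*}(X) \cup S$ and leave via some neighbor, and one shows that local validity (no spurious triplex $(A,X,B)$) forces the two edges at $X$ not to form the ``through'' pattern $A \ra X \ra B$ in a way incompatible with $G^*$, while the $G^*$-structure around $X$ already precludes such a cycle since $G^*$ (being an EG of a CG) has no semidirected cycle and $G_0$ inherits this. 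The rest of the argument is bookkeeping with the definitions of $St$, $Nst$, and the adjusting-set formula.
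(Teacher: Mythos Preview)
Your ``if'' direction is fine and matches the paper's one-line dismissal. The problem is the ``only if'' direction: the strategy of starting from an arbitrary maximally oriented CG $G_0$ and re-orienting \emph{only} the edges incident to $X$ does not work, and the hand-wave you flag as the ``main obstacle'' is in fact a genuine failure point.

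Here is a small counterexample. Take $G^*$ to be the undirected triangle on $\{X,A,B\}$; all three edges are non-strong. Let $G_0$ be the maximally oriented CG $X\ra A$, $A\ra B$, $X\ra B$. Take $S=\{B\}$. Then $G^*_{S\ra X}$ has $B\ra X$ and $X\ra A$ at $X$; since $A$ and $B$ are adjacent, no triplex $(\cdot,X,\cdot)$ is created, so $G^*_{S\ra X}$ is locally valid. Your construction flips $X\ra B$ to $B\ra X$ and leaves $A\ra B$ untouched, producing the directed cycle $X\ra A\ra B\ra X$. The theorem is nevertheless true: the witness is $B\ra X$, $X\ra A$, $B\ra A$, but obtaining it requires re-orienting the edge $A\!-\!B$, which is \emph{not} incident to $X$. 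Local validity says nothing about directed paths between members of $Nst_{G^*}(X)$ that run outside the neighborhood of $X$, so the case analysis you sketch cannot close this gap.

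The paper avoids this by a global construction rather than a local patch: it takes the subgraph of all non-strong undirected edges of $G^*$, observes (via line~5 of Table~\ref{tab:algorithmR}) that this subgraph is chordal, and then uses local validity only to conclude that $S\cup\{X\}$ is complete. Completeness of $S\cup\{X\}$ guarantees a perfect elimination ordering of the chordal graph that ends with $X$ followed by the nodes of $S$; orienting all non-strong edges according to that ordering and re-inserting the directed and strong undirected edges of $G^*$ yields a triplex-preserving CG with no semidirected cycle and with exactly $Pa_{G^*}(X)\cup S$ as parents of $X$. The key idea you are missing is this passage through chordality and perfect elimination orderings, which lets one re-orient the \emph{entire} non-strong part consistently with the prescribed neighborhood of $X$.
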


\begin{proof}
The proof is an adaptation of the proof of Lemma 3.1 by \citet{Maathuisetal.2009}. The if part is trivial. To prove the only if part, note first that $S \cup X$ is a complete set because, otherwise, $G^*_{S \ra X}$ would not be locally valid.

Let $G$ denote the graph that contains all and only the non-strong undirected edges in $G^*$. Recall from Lemma \ref{lem:nsu} that these edges had no blocks when line 14 in Table \ref{tab:algorithmS} was to be executed. Therefore, $G$ is chordal by line 5 in Table \ref{tab:algorithmR}. We now show that we can orient the edges of $G$ without creating triplexes or directed cycles and such that $Pa_G(X)=S$. Specifically, we show that there is a perfect elimination sequence that ends with $X$ followed by the nodes in $S$. Orienting the edges of $G$ according to this sequence produces the desired graph. If $G$ is complete, then the sequence clearly exists. If $G$ is not complete, then note that $G$ has at least two non-adjacent simplicial nodes \citep[Theorem 4.1]{JensenandNielsen2007}. Note that one of them is outside of $S \cup X$ because, as shown above, the latter is a complete set. Take that node as the first node in the sequence. Note moreover that the subgraph of $G$ induced by the rest of the nodes is chordal. Therefore, we can repeat the previous step to select the next node in the sequence until we obtain the desired perfect elimination sequence.

Finally, consider the oriented $G$ obtained in the previous paragraph, and add to it all the directed edges and strong undirected edges in $G^*$. We now prove that $G$ is the desired CG in the theorem. First, note that $G$ is maximally oriented because all the undirected edges in it are strong in $G^*$. Second, note that if $G^*$ has a triplex $(A,B,C)$ then $A \bn B \ob C$ must be in $G^*$ when line 14 was to be executed, which implies that neither of the edges in the triplex is non-strong undirected in $G^*$, which implies that $G$ has a triplex $(A,B,C)$. Third, note that $G$ does not have a triplex $(A,B,C)$ that is not in $G^*$ because, otherwise, the triplex should have been created as a product of the perfect elimination sequence above. This is possible only if $A - B - C$ or $A - B \ob C$ exists in $G^*$ when line 14 was to be executed. The former case is impossible by definition of perfect elimination sequence. The latter case implies that $A \bo B$ or $A \ob B$ was in $G^*$ by R1 or R2 when line 14 was to be executed, which contradicts that $A - B$ was a non-strong undirected edge in $G^*$. Fourth, assume to the contrary that $G$ has a semidirected cycle $\rho:V_1, \ldots, V_n$. We can assume without loss of generality that $\rho$ is chordless because if it has a chord between $V_i$ and $V_j$ with $i < j$. Then, divide $\rho$ into the cycles $\rho_L: V_1, \ldots, V_i, V_j, \ldots, V_n=V_1$ and $\rho_R: V_i, \ldots, V_j, V_i$. Note that $\rho_L$ or $\rho_R$ is a semidirected cycle but shorter than $\rho$. By repeated application of this reasoning, we can conclude that $G$ has a chordless semidirected cycle. Note that it follows from the paragraph above that $\rho$ cannot consists of just non-strong undirected edges in $G^*$. Then, it includes some edge that was $A \bn B$ or $A \bb B$ when line 14 was to be executed. The former alternative is impossible \cite[Lemma 10]{SonntagandPenna2015}. The latter alternative implies that $A \bb B - C$ must exist in $G^*$ when line 14 was to be executed, which implies that $A$ and $C$ are adjacent in $G^*$ because, otherwise, $B \bo C$ or $B \ob C$ was in $G^*$ by R1 or R2 when line 14 was to be executed, which contradicts that $B - C$ is a non-strong undirected edge in $G^*$. Then, $A \bo C$ or $A \ob C$ exists in $G^*$ when line 14 was to be executed \cite[Lemma 9]{SonntagandPenna2015}, which implies that $B \bo C$ or $B \ob C$ was in $G^*$ by R3 when line 14 was to be executed, which contradicts that $B - C$ is a non-strong undirected edge in $G^*$.
\end{proof}

The procedure outlined above can be simplified as follows.

\begin{corollary}
$St_{G^*}(X) = \emptyset$ or $Nst_{G^*}(X) = \emptyset$.
\end{corollary}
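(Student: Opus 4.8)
The plan is to argue by contradiction. Suppose $A \in St_{G^*}(X)$ and $B \in Nst_{G^*}(X)$. The first step is to show that $A$ and $B$ are adjacent in $G^*$. By the definition of the essential graph, a non-strong undirected edge of $G^*$ is oriented in both directions across the members of the equivalence class (if, say, $B \ra X$ whenever the edge is oriented and $B \la X$ in no member, then $B \ra X$ would be in $G^*$); in particular some member $G$ has $B \ra X$ in it. The edge between $A$ and $X$ is undirected in $G$ as well, since it is strong. If $A$ and $B$ were non-adjacent, the subgraph of $G$ induced by $\{A,X,B\}$ would be $A - X \la B$, which is a triplex $(A,X,B)$, whereas the subgraph of $G^*$ induced by $\{A,X,B\}$ is $A - X - B$, which is not. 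Since $G^*$ and its members have the same triplexes, this is a contradiction, so $A$ and $B$ are adjacent.

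Since the edges $A - X$ and $B - X$ are undirected in $G^*$, the three nodes lie in one chain component of $G^*$; as $G^*$ is a chain graph, the edge between $A$ and $B$ is then undirected too. Suppose first that $A - B$ is strong. When line 14 of Table \ref{tab:algorithmS} is about to run, the strong undirected edges are exactly those blocked at both ends and the non-strong undirected edges are exactly those with no blocks; so at that moment $X \bb A$, $A \bb B$, and the edge between $B$ and $X$ is block-free. But then R3, applied to the route $X \bb A \bb B$ together with the edge between $X$ and $B$, puts a block on the edge between $B$ and $X$, so that edge cannot be a non-strong undirected edge of $G^*$ --- a contradiction. Hence $A - B$ is non-strong.

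This is the main case. Now $A$ itself is incident both to the strong undirected edge $A - X$ and to the non-strong undirected edge $A - B$, so the configuration is self-similar and no immediate reduction is available. Instead one examines \emph{why} $A - X$ is strong, i.e. by which step of Table \ref{tab:algorithmR} the edge between $A$ and $X$ received blocks at both ends: since each application of R1--R4 can place only one block on that edge, either two such rules fired --- which exhibits a node $P$ adjacent to $X$ but not to $A$ whose edge to $X$ is forced undirected, and symmetrically a node $Q$ adjacent to $A$ but not to $X$ --- or the edge lies on a chordless cycle of length greater than three handled by line 5. In every sub-case the aim is the same: the witness forcing $A - X$ to remain undirected, combined with the facts that $B$ is adjacent to both $A$ and $X$ and that the edge between $B$ and $X$ carries no block, lets one apply R3 or R4 (using $A$, $X$, $B$ and the witness) to place a block on the edge between $B$ and $X$, contradicting once again that it is non-strong. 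I expect this last step to be the crux: one must show that the obstruction which makes $A - X$ strong --- a would-be spurious triplex or a chordless cycle --- simultaneously prevents the edge between $B$ and $X$ from being left undirected, so that $B \notin Nst_{G^*}(X)$. With the three cases closed the statement follows; equivalently, it says that in each chain component of $G^*$ either every undirected edge is strong or every undirected edge is non-strong.
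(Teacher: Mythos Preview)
Your argument is sound through the point where you show $A$ and $B$ must be adjacent and through the sub-case where $A - B$ is strong. The genuine gap is the sub-case where $A - B$ is non-strong: you only sketch a plan (``examine why $A - X$ is strong'' via a case analysis on which of R1--R4 or line~5 placed each block) and openly call it the crux without carrying it out. That case analysis would be delicate, and you have not shown it closes.

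The paper sidesteps this entirely with a single structural fact you are not using: just before line~14, $G^*$ has no induced triangle of the shape $A \bo X$, $X - B$, $A - B$ with the last two edges blockless (this is Lemma~9 of Sonntag and Pe\~na 2015; it is the $G^*$-analogue of Lemma~\ref{lem:noundirectedcycleS}). Since $A \bb X$ in particular has a block at the $A$-end, and $X - B$ is blockless by hypothesis, the edge $A - B$ is forced to carry \emph{some} block. One block is already enough: together with a block on $A \bb X$, R3 fires along the triangle and places a block on $X - B$, contradicting $B \in Nst_{G^*}(X)$. So no case split on ``$A - B$ strong vs.\ non-strong'' is needed, and the heavy case analysis you anticipate never arises.

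Two minor remarks. Your adjacency argument (pick a member with $B \ra X$ and exhibit a spurious triplex) is correct but semantic; the paper's version is purely syntactic: if $A$ and $B$ were non-adjacent, R1 or R2 would already have blocked $X - B$. And your observation that $A - B$ must be undirected (same chain component) is fine, but again unnecessary once you use the forbidden-triangle lemma, which only needs one block on $A - B$, not two.
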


\begin{proof}
Assume the contrary. Then, $G^*$ has a subgraph $A \bb X - B$ when line 14 in Table \ref{tab:algorithmS} is to be executed. Then, $A$ and $B$ are adjacent in $G^*$ because, otherwise, the edge $X - B$ would have some block by R1 or R2. However, this implies that the edge $A - B$ has some block by Lemma \ref{lem:noundirectedcycleS}, which implies that $X - B$ has some block by R3. This is a contradiction.
\end{proof}

\section{DISCUSSION}\label{sec:discussion}

In this paper, we have presented an algorithm to identify the strong edges in an EG. We have also shown how this makes it possible to compute bounds of causal effects under the assumption that the true CG is unknown but maximally oriented. In the future, we would like to derive a similar result for minimally oriented CGs. Moreover, as mentioned in the introduction, an EG is a deflagged graph but not necessarily the largest in the equivalence class. Therefore, an EG may contain a directed edge where the largest deflagged graph has an undirected edge. Then, the algorithm in Table \ref{tab:algorithmS} may be improved by consulting the largest deflagged graph before trying labeling a directed edge as strong. An algorithm for constructing this graph exists \citep{RoveratoandStudeny2006}.

\end{document}